\documentclass[11pt]{amsart}
\usepackage{amsmath,amssymb,a4wide} 
\usepackage{xcolor}
\usepackage{mathrsfs}
\usepackage{dsfont}
\usepackage{mathabx}
\usepackage{tikz,pgfplots}
\usepackage{hyperref}
\usepackage{ulem}
\usepackage{graphicx}
\usepackage{svrsymbols}

\newtheorem{theorem}{Theorem}[section]
\newtheorem{lemma}{Lemma}[section]
\newtheorem{proposition}{Proposition}[section]
\newtheorem{remark}{Remark}[section]

\newtheorem{definition}{Definition}[section]
\newtheorem{example}{Example}[section]

\newcommand{\R}{{\mathbb R}} 
\newcommand{\C}{{\mathbb C}} 
\newcommand{\N}{{\mathbb N}}

\newcommand{\dd}{{\rm d}}

\newcommand{\icomplexe}{{\rm i}}

\usepackage{color}

\title[$\hat{A}$- and $\hat{I}$-stability of Runge--Kutta collocation methods]
{$\hat{A}$- and $\hat{I}$-stability of collocation Runge--Kutta methods}

\author[G. Dujardin]{Guillaume Dujardin}
\address[G. Dujardin]{Univ. Lille, Inria, CNRS, UMR 8524 - Laboratoire Paul Painlev\'e, F-59000 Lille, France\\
\href{mailto:guillaume.dujardin@inria.fr}{\tt guillaume.dujardin@inria.fr}}
\author[I. Lacroix-Violet]{Ingrid Lacroix-Violet}
\address[I. Lacroix-Violet]{Université de Lorraine, CNRS, IECL, F-54000 Nancy, France\break
\href{mailto:ingrid.lacroix@univ-lorraine.fr}{\tt ingrid.lacroix@univ-lorraine.fr}}

\begin{document}

%%% ABSTRACT

\begin{abstract}
  This paper deals with stability of classical Runge--Kutta collocation methods.
  When such methods are embedded in linearly implicit methods as developed in \cite{DL2020}
  and used in \cite{DL2023PDE} for the time integration of nonlinear evolution PDEs,
  the stability of these methods has to be adapted to this context.
  For this reason, we develop in this paper several notions of stability, that we analyze.
  We provide sufficient conditions that can be checked algorithmically to ensure
  that these stability notions are fulfilled by a given Runge--Kutta collocation method.
  We also introduce examples and counterexamples used in \cite{DL2023PDE}
  to highlight the necessity of these stability conditions in this context.
\end{abstract}

%%% TITLE AND AMS CLASSIFICATION

\maketitle
{\small\noindent 
  {\bf AMS Classification.} 65L06, %65M20,
  65L20, 65L04, 65M70, 65M12, 65P40. 

\bigskip\noindent{\bf Keywords.} Numerical analysis,
Runge--Kutta methods, collocation methods, stability.

%%%% SECTION

%\tableofcontents

\section{Introduction}

This paper deals with stability of classical Runge--Kutta collocation methods.
We introduce and analyze several notions of stability for such methods,
that are, in particular, useful in the context of linearly implicit methods
for the numerical integration of evolution PDEs.
Indeed, the linearly implicit methods developed in \cite{DL2023PDE} embed
a Runge--Kutta collocation method.
Using additional unknowns and a collocation trick
(such as in the relaxation method developed in \cite{Besse2004} for the Schr\"odinger equation),
they allow for the solution of a {\it linear} system at each time step, even if the original
PDE is nonlinear.
Moreover, they are shown (in \cite{DL2023PDE}) to have high order accuracy in time,
provided they satisfy suitable stability conditions.
The goal of this paper is to introduce these stability notions,
to provide sufficient conditions on the coefficients of a Runge--Kutta collocation method
to ensure them, and to provide the reader with examples and counterexamples.

For solving dissipative problems such as the heat equation with such linearly implicit methods,
the relevant notion of stability for Runge--Kutta collocation methods that we introduce is named
$\hat A$-stability (see Definition \ref{def:stab} below).
This stability notion is a generalization of the classical $A$-stability
(which is recalled below in Section \ref{sec:StabRK}).
For solving oscillatory problems such as the Schr\"odinger equation with such methods,
the relevant notion of stability for Runge--Kutta collocation methods that we introduce is
named $\hat I$-stability.

Of course, stability analysis of Runge--Kutta methods has a long history.
Let us mention the $A$-stability introduced by Dahlquist \cite{Dahlquist63},
to ensure appropriate long-time behaviour (the letter $A$ is short for the word ``asymptotic'')
of Runge--Kutta methods applied to non-expensive linear scalar equations,
the $B$-stability introduced by Butcher \cite{Butcher75} \cite{BurrageButcher79}
for vector fields satisfying a one-sided Lipschitz condition,
which generalizes asymptotic stability to nonlinear differential equations
(see also \cite{Crouzeix79}).
The relations between these two notions is studied and summarized in \cite{Hairer86}
using the $W$-transform.
The notions of $A$- and $B$-stability have been generalized to non-autonomous ODEs
yielding the definitions of $AN$- and $BN$-stability \cite{BurrageButcher79}.
For Runge--Kutta methods, the notion of $BN$-stability is essentially the same as that
of algebraic stability \cite{Hairer81}.
The definition of algebraic stability involves a non-negative quadratic form depending
on the coefficients of the Runge--Kutta method at hand.
It was discovered in \cite{SanzSerna88} that if this quadratic form is zero, then the Runge--Kutta
method is symplectic when applied to Hamiltonian equations.
The interested reader may find a summary of stability notions for numerical methods
for solving evolution ODEs in \cite{Butcher06}, including $G$-stability.
Stability properties of Runge--Kutta methods have also been studied outside
the ODE context (see for example \cite{Song12}).

In the context of integrating time-evolution systems arising from the spatial discretization
of evolution PDEs, the relevant notions of stability depend on the PDE at hand,
and, in particular, on whether that original PDE is parabolic, hyperbolic, or hamiltonian.
This dates back, at least, to the seminal work of Crouzeix
(see for example \cite{Crouzeix75} for general ODEs involving operators and
\cite{Crouzeix76} for parabolic problems),
and has a continued history in both linear and nonlinear contexts
(see for example \cite{Crouzeix93} for general
rational approximations of analytic semigroups in Banach spaces,
\cite{Lubich95} for quasilinear parabolic problems,
\cite{Tadmor01} for strong stability preserving high order methods for hyperbolic problems).

In this paper, we investigate notions of stability for collocation Runge--Kutta methods
embedded in linearly implicit methods defined in the ODE setting in \cite{DL2020}
and later extended to the semilinear PDE setting in \cite{DL2023PDE}.
We focus on the PDE case, and assume the linearly implicit method is applied to the (large) ODE
system obtained after space discretization of the PDE at hand.
We distinguish two main cases, depending on the unbounded linear differential operator of the PDE.
First, when the linear operator is dissipative (roughly speaking, its spectrum
is located in a left half plane of the complex plane and the real part of that spectrum
is not bounded below), we introduce the notion of $\hat{A}$-stability.
Second, when the linear operator is dispersive (roughly speaking, its
spectrum is located in a vertical line of the complex plane and its imaginary part is not bounded),
we introduce the notion of $\hat{I}$-stability.
The relevance of these two stability notions for Runge--Kutta methods embedded in linearly
implicit method for solving evolution PDEs is discussed and numerically illustrated
in \cite{DL2023PDE}.
The goal of this paper is to provide sufficient conditions on the coefficients of the
collocation Runge--Kutta method to ensure $\hat{A}$- and $\hat{I}$-stability.

The outline of this paper is as follows.
In Section \ref{sec:StabRK}, we introduce several notions of stability for Runge--Kutta methods
that are relevant in our context, and recall the basics about Runge--Kutta collocation methods.
In Section \ref{sec:spA}, we indicated how one can compute the spectrum of the matrix of
the coefficients of a Runge--Kutta collocation method (Lemma \ref{lem:transfoTau}),
and we derive some sufficient conditions to ensure the stability properties defined in the previous
section (Lemmas \ref{lem:AdonneAchapeau} and \ref{lem:IdonneIchapeau}).
As an application of these criteria, we prove that Runge--Kutta collocation methods using
Gauss points are $\hat A$-stable (Theorem \ref{th:GaussAchapeau}),
and we provide an algorithm implementing our sufficient conditions
to guarantee that a given Runge--Kutta collocation method is $\hat A$-stable
(see Section \ref{subsec:algo}) which uses the Routh-Hurwitz algorithm.
We describe the influence of symmetry (with respect to $1/2$) of the collocation points
on the stability of a Runge--Kutta collocation method (Section \ref{subsec:symmetry}).
We also investigate the stability of (forward) Runge--Kutta collocation methods
with $s\leq 4$ stages in Section \ref{subsec:forward1234}.
The last section of this paper (Section \ref{sec:examples}) is devoted to the analysis
of a few examples of Runge--Kutta collocation method with at most $5$-stages that are used
numerically as examples or counterexamples in \cite{DL2023PDE}.

%\tableofcontents

%%%% SECTION

\section{Notions of stability of Runge--Kutta methods}
\label{sec:StabRK}

\subsection{Several notions of stability}
\label{subsec:stabRK}
Assume we are given a time interval $J\subset\R$, an open set $\Omega\subset\R^d$,
a (smooth) function $f$ from $J\times\Omega$ to $\R^d$ and $(t_0,y_0)\in J\times\Omega$.
For the time integration of the Cauchy problem
\begin{equation}
  \label{eq:CauchyEDO}
  \left\{
    \begin{matrix}
      y'(t) & = & f(t,y(t))\\
      y(t_0) & = & y_0
    \end{matrix},
  \right.
\end{equation}
one may use a Runge--Kutta method with $s\in\N\setminus\{0\}$ stages, defined by
$c_1,\cdots,c_s\in\R$, $(a_{i,j})_{1\leq i,j\leq s} \in \R^{s^2}$ and $b_1,\cdots,b_s\in\R$
defined by the following classical algorithm.
Given some (small) time step $h>0$, and starting from some $y^0\in\Omega$ close to $y_0$, compute
successively for $n\geq 0$ first $(y_{n,i})_{1\leq i\leq s}\in \Omega^s$ as the solution of
the nonlinear system
\begin{equation}
  \label{eq:RKint}
  y_{n,i} = y_n + h\sum_{j=1}^s a_{i,j} f(t_0+(n+c_j)h,y_{n,j}),
  \qquad
  (1\leq i\leq s),
\end{equation}
and then $y_{n+1}$ using the explicit computation
\begin{equation}
  \label{eq:RKexpl}
  y_{n+1} = y_n + h\sum_{i=1}^s b_i f(t_0+(n+c_i)h,y_{n,i}).
\end{equation}

These methods have a long history, which started at the end of the 19th century
\cite{Runge1895}, and have been extensively studied in several contexts (see for example
\cite{HNW93}, \cite{HW96} and references therein, as well as for several other numerical methods)
and order conditions as well as stability conditions are well understood.
Usually, the coefficients $(a_{i,j})_{1\leq i,j\leq s}$ are grouped in a square matrix denoted $A$,
$b_1,\cdots,b_s$ are grouped in a column vector denoted $b$ and $c_1,\cdots,c_s$
are grouped in a column vector denoted $c$.
In particular, the \emph{linear stability function} $R$ of these methods, defined for $\lambda\in\C$ by
\begin{equation}
\label{eq:defR}
  R(\lambda) = 1 + \lambda b^t (I-\lambda A)^{-1} \mathds{1},
\end{equation}
where $I$ is the identity matrix of size $s$ and $\mathds{1}$ is the column vector
of size $s$ with all coefficients equal to $1$, plays a crucial role in the analysis of most
forms ot stability for the methods. Indeed, in the scalar case $d=1$,
for the Dahlquist equation, which corresponds to \eqref{eq:CauchyEDO} with $\Omega=\R$
and $f(y,y)=ay$ for some $a\in\C$,
the sequence produced by the Runge--Kutta method \eqref{eq:RKint}-\eqref{eq:RKexpl} satisfies
for all $n\in\N$, $y^n=\left(R(ah)\right)^n y^0$.
Observe that $R$ is a rational function of $\lambda$ and any of its poles
in $\C$ is the inverse of a non-zero eigenvalue of the matrix $A$.

For the purpose of integrating systems of the form \eqref{eq:CauchyEDO} arising
from the space-discretization of semilinear evolution PDEs
using methods involving Runge--Kutta methods of the form \eqref{eq:RKint}-\eqref{eq:RKexpl}
(see \cite{DL2023PDE}),
and following \cite{Crouzeix80}, \cite{Hairer82} and \cite{BHV86},
we use the following eight notions of stability.
We denote by $\C^-$ the set of complex numbers with non positive real part,
and by $i\R$ the set of purely imaginary numbers.

\begin{definition}
\label{def:stab}
A Runge--Kutta method is said to be
\begin{itemize}
  \item \emph{$A$-stable} if for all $\lambda\in\C^-$, $|R(\lambda)|\leq 1$.
  \item \emph{$I$-stable} if for all $\lambda\in i\R$, $|R(\lambda)|\leq 1$.
  \item \emph{$AS$-stable} if for all $\lambda\in\C^-$, the matrix $(I-\lambda A)$ is
    invertible and $\lambda\mapsto \lambda b^t(I-\lambda A)^{-1}$ is uniformly bounded on $\C^-$.
  \item \emph{$ASI$-stable} if for all $\lambda\in\C^-$, the matrix $(I-\lambda A)$ is
    invertible and $\lambda\mapsto (I-\lambda A)^{-1}$ is uniformly bounded on $\C^-$.
  \item \emph{$IS$-stable} if for all $\lambda\in i\R$, the matrix $(I-\lambda A)$ is
    invertible and $\lambda\mapsto \lambda b^t(I-\lambda A)^{-1}$ is uniformly bounded on $i\R$.
  \item \emph{$ISI$-stable} if for all $\lambda\in i\R$, the matrix $(I-\lambda A)$ is
    invertible and $\lambda\mapsto (I-\lambda A)^{-1}$ is uniformly bounded on $i\R$.
  \item \emph{$\hat{A}$-stable} if it is $A$-stable, $AS$-stable and $ASI$-stable.
  \item \emph{$\hat{I}$-stable} if it is $I$-stable, $IS$-stable and $ISI$-stable.
\end{itemize}
\end{definition}

A few examples of Runge--Kutta methods and the analysis of their stability properties
can be found in Section \ref{sec:examples}.

\begin{remark}
  \label{rem:AdonneI}
  Observe that, from this definition, an $A$-stable Runge--Kutta method is also $I$-stable.
  Similarly, an $AS$-stable Runge--Kutta method is also $IS$-stable, and
  and $ASI$-stable Runge--Kutta method is also $ISI$-stable.
  As a consequence, an $\hat A$-stable method is also $\hat I$-stable.
\end{remark}

\begin{remark}
  \label{rem:SIimpliqueS}
  If $b$ is in the range of $A^t$, then $ASI$-stability implies $AS$-stability using Lemma
  4.4 of \cite{BHV86}. Similarly, $ISI$-stability implies $IS$-stability.
\end{remark}

\subsection{Collocation Runge--Kutta methods}
\label{subsec:collocationRK}

This paper deals with collocation Runge--Kutta methods in the sense of the following definition
and the analysis of their stability properties in the sense of Definition \ref{def:stab}.

\begin{definition}[collocation methods]
  \label{def:colmeth}
  The Runge--Kutta method defined by $A$, $b$ and $c$ is said to be of \emph{collocation}
  when $c_1,c_2,\dots,c_s$ are distinct real numbers
  and $A$ and $b$ satisfy for all $(i,j)\in\{1,\dots,s\}^2$,
  \begin{equation}
    \label{eq:defaijbi}
    a_{ij} = \int_0^{c_i} \ell_j(\tau) \dd \tau
    \qquad\text{and}\qquad
    b_i = \int_0^1 \ell_i(\tau)\dd \tau,
  \end{equation}
  where $(\ell_i)_{1\leq i\leq s}$ are the $s$ Lagrange polynomials associated to $(c_1,\dots,c_s)$
  defined by
  \begin{equation}
    \label{eq:defli}
    \forall i\in\{1,\dots,s\},\qquad
    \ell_i(\tau) = \prod_{\substack{j\neq i\\ j=1} }^s \frac{(\tau-c_j)}{(c_i-c_j)}. 
  \end{equation}
  We denote by $\pi$ the normalized polynomial of degree $s$, the zeros of which are $c_1,\cdots,c_s$,
  {\it i.e.}
  \begin{equation}
    \label{eq:defpi}
    \pi=\prod_{i=1}^s (X-c_i).
  \end{equation}
  Moreover, a Runge--Kutta collocation method is said to be \emph{forward} when
  all $(c_i)_{1\leq i\leq s}$ are non negative.
\end{definition}

Order condition for collocation Runge--Kutta methods are well known (see {\it e.g.} \cite{GNI2002}),
and, amongst those methods, Gauss methods are known to have {\it superconvergence} in the sense
that they have $s$ stages and order $2s$ in the ODE setting \cite{GNI2002}.
These collocation Runge--Kutta methods appear as an essential tool of the high oder linearly implicit
methods derived for ODEs in \cite{DL2020} and used further for PDEs in \cite{DL2023PDE}.
Since all the coefficients of such methods are defined once the distinct $s$ real numbers
$c_1,\cdots,c_s$ are chosen, their stability analysis (in the sense of Definition \ref{def:stab})
can be reduced to the analysis of polynomials with one unknown (whose coefficients are polynomials
in $c_1,\cdots,c_s$), as we shall see in Section \ref{sec:stabcollocationRK}.
One has for example the following lemma for the computation of $(a_{i,j})_{1\leq i,j\leq s}$.

\begin{lemma}[Expression of $A$ for collocation Runge--Kutta methods]
  \label{lem:relationA}
  Let $s\geq 1$ be an integer, and $c_1,\cdots,c_s$ be $s$ distinct real numbers.
  The corresponding Runge--Kutta collocation method with matrix $A$ defined in
  \eqref{eq:defaijbi}-\eqref{eq:defli} satisfies
  \begin{equation}
    \label{eq:relationA}
    A=W_c \times V_{c}^{-1},
  \end{equation}
  where
  \begin{equation}
    \label{eq:defVc}
    V_c=
    \begin{pmatrix}
      1 & c_1^1 & \cdots & c_1^{s-1} \\
      1 & c_2^1 & \cdots & c_2^{s-1} \\
      \vdots & \vdots & \ddots & \vdots \\
      1 & c_s^1 & \cdots & c_s^{s-1}
    \end{pmatrix}
    \qquad\text{and}\qquad
    W_c=
    \begin{pmatrix}
      \frac{c_1^1}{1} & \frac{c_1^2}{2} & \cdots & \frac{c_1^{s}}{s} \\
      \frac{c_2^1}{1} & \frac{c_2^2}{2} & \cdots & \frac{c_2^{s}}{s} \\
      \vdots & \vdots & \ddots & \vdots \\
      \frac{c_s^1}{1} & \frac{c_s^2}{2} & \cdots & \frac{c_s^{s}}{s}
    \end{pmatrix}.
  \end{equation}
\end{lemma}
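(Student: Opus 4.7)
My plan is to exploit the fact that the Lagrange polynomials $(\ell_j)_{1\le j\le s}$ form a basis of the space of polynomials of degree at most $s-1$, and to test the collocation identity $a_{ij}=\int_0^{c_i}\ell_j(\tau)\dd\tau$ against the monomial basis $1,X,X^2,\ldots,X^{s-1}$. This will convert the integral definition of the $a_{ij}$ into a matrix identity involving $V_c$ and $W_c$.

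First, I would recall that any polynomial $p$ of degree at most $s-1$ admits the Lagrange expansion $p(\tau)=\sum_{j=1}^{s} p(c_j)\,\ell_j(\tau)$. Applied to $p(X)=X^{k}$ for $k\in\{0,1,\ldots,s-1\}$, this yields
\begin{equation*}
  \tau^{k} = \sum_{j=1}^{s} c_j^{k}\,\ell_j(\tau).
\end{equation*}
Integrating this identity from $0$ to $c_i$ (which is legitimate since both sides are polynomials of $\tau$), and using \eqref{eq:defaijbi}, I would obtain
\begin{equation*}
  \frac{c_i^{k+1}}{k+1} = \sum_{j=1}^{s} c_j^{k}\,a_{ij},
  \qquad (1\le i\le s,\ 0\le k\le s-1).
\end{equation*}

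The second step is to read this family of $s^2$ scalar identities as a single matrix equation. With $m=k+1$ running from $1$ to $s$, the left-hand side is exactly the $(i,m)$-entry of $W_c$ as defined in \eqref{eq:defVc}, while the right-hand side is $\sum_{j=1}^s a_{ij}\,(V_c)_{j,m}$, i.e.\ the $(i,m)$-entry of the product $A V_c$. Hence $A V_c = W_c$.

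Finally, since $c_1,\ldots,c_s$ are pairwise distinct by assumption, the matrix $V_c$ is a Vandermonde matrix with $\det V_c=\prod_{1\le i<j\le s}(c_j-c_i)\neq 0$, so $V_c$ is invertible and I can write $A=W_c\,V_c^{-1}$, which is the desired identity \eqref{eq:relationA}. There is no real obstacle in this proof: the only substantive ingredient is the bijection between integration of monomials and integration of Lagrange polynomials; the rest is just bookkeeping of indices.
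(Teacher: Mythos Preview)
Your proof is correct and follows essentially the same approach as the paper: both test the collocation identities against the monomials $\tau^{k}$ for $0\le k\le s-1$, obtain $\sum_j a_{ij}c_j^{k}=c_i^{k+1}/(k+1)$, read this as $A V_c=W_c$, and invert the Vandermonde matrix. The only cosmetic difference is that the paper phrases the key step as exactness of the quadrature $\sum_j a_{ij}f(c_j)=\int_0^{c_i}f$ for polynomials of degree at most $s-1$, while you derive it by first expanding the monomial in the Lagrange basis; these are the same thing.
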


\begin{proof}
  Since the $s$ quadrature points $c_1,\cdots,c_s$ are distinct, the matrix $A$ is well-defined
  by \eqref{eq:defaijbi}-\eqref{eq:defli}. Morover, we have for all polynomial fonction $f$ or
  degree at most $s-1$ that
  \begin{equation*}
    \forall i\in\{1,\cdots,s\},\qquad
    \sum_{j=1}^s a_{i,j} f(c_j) = \int_0^{c_i} f(\tau){\rm d} \tau.
  \end{equation*}
  In particular, for all $p\in\{1,\cdots,s\}$ and all $i\in\{1,\cdots,s\}$,
  for the function $f:\tau\mapsto \tau^{p-1}$, we obtain
  \begin{equation*}
    \sum_{j=1}^s a_{i,j} c_j^{p-1} = \frac{c_i^p}{p}.
  \end{equation*}
  This relation exactly states that $A\times V_c=W_c$. The inversibility of the Vandermonde
  matrix $V_c$ ensures that \eqref{eq:relationA} holds.
\end{proof}

\section{Stability analysis of collocation Runge--Kutta methods}
\label{sec:stabcollocationRK}

In this section, we consider a collocation Runge--Kutta method, the coefficients
$(a_{i,j})_{1\leq i,j\leq s}$ and $(b_i)_{1\leq i \leq s}$ of which are derived using Definition
\ref{def:colmeth} from the $s$ distinct real numbers $c_1,\cdots,c_s$.
Without loss of generality (using a permutation of the coefficients), we assume that
$c_1<\cdots<c_s$ are ordered this way.

\subsection{Computation of the characteristic polynomial of $A$ in terms of $(c_i)_{1\leq i\leq s}$}
\label{sec:spA}

\begin{lemma}
  \label{lem:transfoTau}
  Let $A$ be the matrix of a Runge--Kutta collocation method at the $s$ distinct real
  points $(c_i)_{1\leq i\leq s}$ with $c_1<\cdots<c_s$.
  Let $\pi$ denote the polynomial defined in \eqref{eq:defpi} in Definition \ref{def:colmeth}.
  Denote by $\tau$ the linear mapping
  \begin{equation*}
    \tau :
    \begin{pmatrix}
      \C_s[X] & \longrightarrow & \C_s[X]\\
      \sum_{k=0}^s a_k X^k & \longmapsto & \sum_{k=0}^s k! a_k X^k
    \end{pmatrix}.
  \end{equation*}
  The characteristic polynomial $\chi_A$ of $A$ is $\tau(\pi)/(s!)$.
\end{lemma}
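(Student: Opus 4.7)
The plan is to reduce the computation of $\chi_A$ to the determinant of an explicit sparse matrix and then expand that determinant along its last column.

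First, by Lemma \ref{lem:relationA}, $A=W_c V_c^{-1}$ is similar to $N:=V_c^{-1}W_c$, so $\chi_A=\chi_N$. I would identify $N$ as the matrix, in the monomial basis $(1,X,X^2,\ldots,X^{s-1})$ of $\C_{s-1}[X]$, of the operator $\mathcal{L}\colon p\mapsto \left(\int_0^X p(\tau)\,\dd\tau\right)\bmod \pi(X)$. Indeed, for $\vec\alpha=(\alpha_0,\ldots,\alpha_{s-1})^t$ and $p=\sum_k\alpha_kX^k$, the vector $V_c\vec\alpha$ is the vector of values $(p(c_i))_i$, while $W_c\vec\alpha$ (which equals $AV_c\vec\alpha$) is the vector of values $\bigl(\int_0^{c_i}p(\tau)\,\dd\tau\bigr)_i$ by the computation in the proof of Lemma \ref{lem:relationA}; since reduction modulo $\pi$ preserves values at the $c_i$, this is also the value vector of $\mathcal L(p)$. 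Writing $\pi=\sum_{k=0}^s\pi_kX^k$ with $\pi_s=1$, one gets $\mathcal L(X^k)=X^{k+1}/(k+1)$ for $k\leq s-2$, and $\mathcal L(X^{s-1})=-\sum_{k=0}^{s-1}(\pi_k/s)X^k$. Thus $N$ vanishes everywhere except on the subdiagonal, where its entries are $1,1/2,\ldots,1/(s-1)$, and in the last column, whose entries are $-\pi_0/s,\ldots,-\pi_{s-1}/s$.

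Second, I would compute $\chi_N(X)=\det(XI-N)$ by cofactor expansion along the last column. For each $i\in\{1,\ldots,s\}$, the $(i,s)$-minor of $XI-N$ splits block-diagonally: an upper-left $(i-1)\times(i-1)$ lower bidiagonal block with diagonal $X$, contributing $X^{i-1}$, and a lower-right $(s-i)\times(s-i)$ upper bidiagonal block with diagonal $-1/i,-1/(i+1),\ldots,-1/(s-1)$, contributing $(-1)^{s-i}(i-1)!/(s-1)!$. The $(-1)^{i+s}$ cofactor sign cancels with $(-1)^{s-i}$ to give $+1$, so the $(i,s)$ cofactor is $X^{i-1}(i-1)!/(s-1)!$. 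The $(i,s)$-entry of $XI-N$ is $\pi_{i-1}/s$ for $i<s$ and $X+\pi_{s-1}/s$ for $i=s$. Summing cofactors times entries and reindexing $k=i-1$ yields
\[
\chi_N(X)=\sum_{k=0}^{s-1}\frac{k!\,\pi_k}{s!}X^k+X^s=\frac{1}{s!}\sum_{k=0}^{s}k!\,\pi_k X^k=\frac{\tau(\pi)(X)}{s!},
\]
which is the claimed formula.

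The only real obstacle is careful bookkeeping: one has to recognize that the product $1\cdot\tfrac12\cdots\tfrac1{s-1}$ arising from the lower-right block, combined with the division by $s$ from the last column of $N$, produces exactly the prefactor $k!/s!$ that matches the definition of $\tau$, and one has to verify that the cofactor signs cancel with the signs from the triangular subblock. Once the identification $A\sim N$ is made and $N$ is written down explicitly, the rest of the proof is mechanical.
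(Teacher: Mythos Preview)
Your proof is correct and takes a genuinely different, more elementary route than the paper's.

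The paper does not pass to the similar matrix $V_c^{-1}W_c$. Instead it works directly with $A-\lambda I$ acting on value vectors of polynomials: it introduces, besides $V_c$, a second Vandermonde-type matrix $V$ (with columns $c_i,\ldots,c_i^s$), a diagonal matrix $D$, a nilpotent shift $N$ and a rank-one matrix $R_1$, obtains the factorization $\lambda V_cD-V=V(I+\lambda V^{-1}R_1M_\lambda^{-1})M_\lambda$ with $M_\lambda=-I+\lambda N$, and then exploits the rank-one structure of $V^{-1}R_1M_\lambda^{-1}$ to reduce $\det(\lambda I-A)$ to $1+\lambda\,\mathrm{Tr}(V^{-1}R_1M_\lambda^{-1})$. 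Evaluating that trace requires computing $M_\lambda^{-1}$ via a Neumann series and solving $VX=\mathds 1$, which is where $\pi$ reappears. This machinery also forces the paper to assume temporarily that all $c_i\neq 0$ (so that $V$ is invertible), a restriction that is never explicitly removed in the proof.

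Your approach sidesteps all of this: once you observe that $V_c^{-1}W_c$ is the matrix of ``integrate from $0$, then reduce modulo $\pi$'' in the monomial basis, the matrix is a companion-type matrix whose characteristic polynomial falls out from a single cofactor expansion. No auxiliary matrices, no rank-one determinant lemma, and no restriction on the $c_i$. What the paper's argument buys, if anything, is a slightly more explicit description of the resolvent-type quantity $\det(\lambda V_cD-V)$ along the way; but for the stated lemma your argument is cleaner.
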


\begin{proof}
  Assume that all $c_i\neq 0$ for the time being. For a polynomial
  \begin{equation}
    \label{eq:defP}
    P(X) = s \alpha_{s-1}X^{s-1} + \dots + 2\alpha_1X + \alpha_0 \in \C_{s-1}[X],
  \end{equation}
  we denote by ${\neutrino}(P)$ the column vector with component $i$ equal to $P(c_i)$.
  Introducing the diagonal matrix $D$ with entries $(1,2,\cdots,s-1,s)$, this yields
  \begin{equation}
    \label{eq:defv}
    {\neutrino}(P) = V_c D
    \left[
      \begin{matrix}
        \alpha_0\\
        \vdots\\
        \alpha_{s-1}
      \end{matrix}
    \right],
  \end{equation}
  where $V_c$ is the square matrix defined in \eqref{eq:defVc}.
  Note that $V_c$ is invertible since all $c_i$ are distinct.
  We extend the definition of $\neutrino$ to $\C_{s}[X]$. Note however that \eqref{eq:defv} no
  longer holds for polynomials of degree $s$.
  Conversely, for $Y=(y_1,\cdots,y_s)^t\in\C^s$, we denote by ${\mathfrak c}(P)$ the coefficients
  in the canonical basis of $\C_{s-1}[X]$ of the only polynomial of degree at most $s-1$,
  the value of which at $c_i$ is $y_i$ for all $i\in\{1,\cdots,s\}$.
  This corresponds to expanding the polynomial $\sum_{i=1}^s y_i \ell_i(X)$ in the canonical
  basis of $\C_{s-1}[X]$, where ${\ell_i}_{1\leq i\leq s}$ are the $s$ Lagrange interpolation
  polynomials defined in \eqref{eq:defli} at points $(c_1,\cdots,c_s)$.
  We associate to a polynomial $P\in\C_{s-1}[X]$ of the form \eqref{eq:defP} its primitive $Q$
  which vanishes at $0$. We have
  \begin{equation}
    \label{eq:defQ}
    Q(X) = \alpha_{s-1} X^s + \dots + \alpha_1 X^2 + \alpha_0X.
  \end{equation}
  For a polynomial $Q$ of this form, we define $\mathfrak c^1 (Q)$ as the column vector
  with entry $i$ equal to $\alpha_{i-1}$ for all $i\in\{1,\cdots,s\}$.
  Introducing the matrix
  \begin{equation}
    \label{eq:defV}
    V=
    \begin{pmatrix}
      c_1 & c_1^2 & \cdots & c_1^{s} \\
      c_2 & c_2^2 & \cdots & c_2^{s} \\
      \vdots & \vdots & \ddots & \vdots \\
      c_s & c_s^2 & \cdots & c_s^{s}
    \end{pmatrix},
  \end{equation}
  we have
  \begin{equation*}
    \neutrino (Q) = V \mathfrak c^1(Q).
  \end{equation*}
  Moeover, since $P$ is a derivative of $Q$, we also have
  \begin{equation*}
    \mathfrak c (P) = D \mathfrak c^1(Q).
  \end{equation*}
  For a general polynomial $P$ of the form \eqref{eq:defP}, the associated polynomial $Q$ as
  in \eqref{eq:defQ} satisfies
  \begin{equation*}
    \neutrino (Q) = A \neutrino (P).
  \end{equation*}
  For $\lambda\in\C$, we infer that
  \begin{eqnarray*}
    (A-\lambda I)\neutrino (P)  & = & A \neutrino (P) - \lambda \neutrino (P)\\
                                & = & \neutrino (Q) - \lambda V_c D \mathfrak c^1(Q)\\
                                & = & V \mathfrak c^1(Q) - \lambda V_c D \mathfrak c^1(Q)\\
                                & = & \left(V- \lambda V_c D \right)\mathfrak c^1(Q)\\
    & = & \left(V- \lambda V_c D \right) D^{-1} V_c^{-1} \neutrino (P).
  \end{eqnarray*}
  Since this holds for all $P$, we infer that
  \begin{equation}
    \label{eq:chiAgene}
    \chi_A(\lambda)={\rm det}(\lambda I-A) = {\rm det} (\lambda V_c D-V)
    \times {\rm det} (D^{-1} V_c^{-1}).
  \end{equation}
  Observe that, setting
  \begin{equation}
    \label{eq:defR1N}
    R_1
    =
    \begin{pmatrix}
      1 & 0 & \cdots & 0 \\
      \vdots & \vdots &   & \vdots\\
      1 & 0 & \cdots & 0
    \end{pmatrix}
    \qquad
    \text{and}
    \qquad
    N
    =
      \begin{pmatrix}
      0 & 2 & 0 & \cdots & 0 \\
      0 & 0 & 3 & \ddots & \vdots \\
      \vdots & \ddots & \ddots & \ddots & 0 \\
      \vdots &    & \ddots & 0 & s\\
      0 & \cdots & \cdots & 0 & 0
    \end{pmatrix}
    ,
  \end{equation}
  we have
  \begin{equation*}
    V_c D = VN + R_1.
  \end{equation*}
  Hence, using \eqref{eq:chiAgene}, $\chi_A$ is proportional to
  ${\rm det} (\lambda V N - V + \lambda R_1)$.
  Introducing the invertible matrix $M_\lambda=-I+\lambda N$, observe that
  \begin{equation*}
    \lambda V N - V + \lambda R_1 = V (-I + \lambda N + \lambda V^{-1}R_1)
    = V (I+ \lambda V^{-1}R_1M_\lambda^{-1}) M_\lambda.
  \end{equation*}
  Since ${\rm det}V$ and ${\rm det} M_\lambda=(-1)^s$ do not depend on $\lambda$, we infer that
  $\chi_A$ is proportional to ${\rm det}(I+ \lambda V^{-1}R_1M_\lambda^{-1})$.
  Observe also that since we have the Dunford decomposition of $M_\lambda$, we have
  \begin{equation*}
    (-M_\lambda)^{-1} = (I-\lambda N)^{-1} = \sum_{k=0}^{s-1} (\lambda N)^k.
  \end{equation*}
  Since for all $k\in\{1,\cdots,s-1\}$, the matrix $N^k=(n_{i,j}^k)_{1\leq i,j\leq s}$
  is a matrix with coefficients zero except for
  $(n^k)_{i,i+k}= \frac{(k+i)!}{(i)!}$ for $1\leq i\leq s-k$, we infer that the
  first row of $M_{\lambda}^{-1}$ is equal to
  $(-1!,-2!\lambda,\cdots,-(s-1)!\lambda^{s-2},-s!\lambda^{s-1})$.
  This implies that $R_1 M_\lambda^{-1}$ is a matrix with all rows equal to
  $(-1!,-2!\lambda,\cdots,-(s-1)!\lambda^{s-2},-s!\lambda^{s-1})$.
  This is equivalent to saying that for all $j\in\{1,\cdots,s\}$, the column number $j$ of
  the matrix $R_1 M_\lambda^{-1}$ is equal to $-(j!)\lambda^{j-1} {\mathds 1}$.
  This implies that the column number $j\in\{1,\hdots,s\}$ of the matrix $V^{-1}R_1M_\lambda^{-1}$ reads
  $-j !\lambda^{j-1} V^{-1} \mathds{1}$.
  Since the matrix $V^{-1} R_1 M_\lambda^{-1}$ has rank 1 due to the form of $R_1$,
  there exists some invertible matrix $B$ and some nontrivial $(\xi_1,\dots,\xi_s)\in\C^s$,
  \begin{equation*}
    B^{-1} (V^{-1} R_1 M_\lambda^{-1}) B =
    \begin{pmatrix}
      \xi_1 & 0 & \cdots & 0 \\
      \xi_2 & 0 & \ddots & \vdots\\
      \vdots & \vdots  & \ddots & 0 \\
      \xi_s & 0 & \cdots & 0
    \end{pmatrix}:=\Xi.
  \end{equation*}
  We infer that
  \begin{equation*}
    \det(I + \lambda V^{-1} R_1 M_\lambda^{-1}) =\det(B(I+\lambda \Xi)B^{-1}) = (1+\lambda \xi_1).
  \end{equation*}
  Observe that $\xi_1={\rm Tr}(\Xi)={\rm Tr}(V^{-1} R_1 M_\lambda^{-1})$.
  In order to compute $\xi_1={\rm Tr}(V^{-1}R_1M_\lambda^{-1})$,
  we compute the entry number $i$ of $-i !\lambda^{-1} V^{-1} \mathds{1}$ for $i\in\{1,\cdots,s\}$.
  The unknown vector $X=(x_1,\cdots,x_s)\in\C^s$ satisfies $VX=\mathds{1}$ if and only
  if the polynomial $T(X)=x_s X^s + \cdots + x_1 X^1$ satisfies for all $j\in\{1,\dots s\}$,
  $T(c_j)=1$. Therefore, the polynomial $T(X)-1\in\C_s[X]$ is a multiple of
  $\pi=\prod_{i=1}^s(X-c_i)$ and therefore writes
  \begin{equation*}
    T-1=e\prod_{i=1}^s(X-c_i),
  \end{equation*}
  for some $e\in\C$. Observe that $T(0)=0$ hence, $-1= e \prod_{i=1}^s(-c_i)$.
  Hence, $e\neq 0$.
  Since,
  \begin{eqnarray*}
    \lambda \xi_1 & = & \lambda {\rm Tr}(V^{-1} R_1 M_\lambda^{-1}) \\
    & = &  - \sum_{i=1}^s i ! \lambda^i (V^{-1}\mathds{1})_{i}\\
    & = & - \sum_{i=1}^s i! x_i\lambda^i \\
    & = & \tau(-T)(\lambda),
  \end{eqnarray*}
  we conclude that
  \begin{eqnarray*}
    1+\lambda\xi_1 & = & \tau(1-T)(\lambda)\\
            & = & - e \tau(\pi)(\lambda).
  \end{eqnarray*}
  Since $e\neq 0$, we conclude that $\chi_A$ is proportional to $\tau(\pi)$.
  Analysing the coefficient of degree $s$ ensures that $\chi_A=\tau(\pi)/(s!)$.
  This concludes the proof when all $c_i\neq 0$.
\end{proof}

\begin{remark}
  \label{rem:0vpA}
  With the hypotheses and notations above, $0$ is an eigenvalue of $A$ if and only if
  there exists $j\in\{1,\cdots,s\}$ such that $c_j=0$.
  This is also equivalent to $0$ being a root of $\tau(\pi)$.
\end{remark}

\begin{remark}
  Using Lemma \ref{lem:transfoTau}, the eigenvalues of $A$ are the zeros of $\tau(\pi)$.
  Observe also that
  \begin{equation}
    \label{eq:stabCmoins}
    \left\{ \frac{1}{z}\ |\ z\in\C^-\setminus\{0\} \right\} = \C^{-}\setminus\{0\}.
  \end{equation}
  Therefore, in view of Definition \ref{def:stab}, a necessary condition
  to have $ASI$ stability is that $\tau(\pi)$ has no root in $\C^- \setminus\{0\}$.
  Similarly, observing that
  \begin{equation}
    \label{eq:stabiRmoins}
    \left\{ \frac{1}{z}\ |\ z\in i \R\setminus\{0\} \right\} = i \R\setminus\{0\},
  \end{equation}
  we obtain that a necessary
  condition to have $ISI$ stability is that $\tau(\pi)$ has no root in $i\R\setminus\{0\}$.
\end{remark}

\subsection{Sufficient conditions to ensure $\hat A$-stability and $\hat I$-stability}

We prove that an $A$-stable Runge--Kutta collocation method such that $A$
has no eigenvalue in $\C^-$ is $\hat{A}$-stable.
Thanks to Lemma \ref{lem:transfoTau}, this is equivalent to proving that
an $A$-stable Runge--Kutta collocation method such that $\tau(\pi)$
has no root in $\C^-$ is $\hat{A}$-stable.
\begin{lemma}
  \label{lem:AdonneAchapeau}
  Assume that an $A$-stable Runge--Kutta collocation method is such that $\tau(\pi)$ has
  no root in $\C^-$. Then, it is $\hat A$-stable.
\end{lemma}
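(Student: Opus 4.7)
The plan is to deduce $AS$-stability and $ASI$-stability from the hypothesis and combine them with the given $A$-stability to obtain $\hat A$-stability. The whole argument rests on translating the hypothesis into spectral information on $A$ and then analyzing the resolvent $(I-\lambda A)^{-1}$ at infinity via the change of variable $\mu=1/\lambda$.

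First, I would apply Lemma \ref{lem:transfoTau}: since $\chi_A=\tau(\pi)/s!$, the assumption that $\tau(\pi)$ has no root in $\C^-$ implies that every eigenvalue of $A$ has strictly positive real part. In particular, $0\in\C^-$ is not an eigenvalue, so $A$ is invertible. Combining this with \eqref{eq:stabCmoins}, I would show that $I-\lambda A$ is invertible for every $\lambda\in\C^-$: for $\lambda=0$ this is trivial, while for $\lambda\in\C^-\setminus\{0\}$ we have $1/\lambda\in\C^-\setminus\{0\}$, which contains no eigenvalue of $A$, hence $I-\lambda A=-\lambda(A-\lambda^{-1}I)$ is invertible.

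Second, I would exploit the rewriting
\begin{equation*}
  (I-\lambda A)^{-1}=-\frac{1}{\lambda}(A-\lambda^{-1}I)^{-1},\qquad \lambda\in\C^-\setminus\{0\},
\end{equation*}
to obtain limits at infinity. As $|\lambda|\to\infty$ with $\lambda\in\C^-$, we have $\lambda^{-1}\to 0$, and by invertibility of $A$, $(A-\lambda^{-1}I)^{-1}\to A^{-1}$. Hence $(I-\lambda A)^{-1}\to 0$ and $\lambda(I-\lambda A)^{-1}=-(A-\lambda^{-1}I)^{-1}\to -A^{-1}$. At $\lambda=0$ the quantity $\lambda b^t(I-\lambda A)^{-1}$ equals $0$, and $(I-0\cdot A)^{-1}=I$.

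Finally, I would conclude by a standard compactness argument: the maps $\lambda\mapsto(I-\lambda A)^{-1}$ and $\lambda\mapsto \lambda b^t(I-\lambda A)^{-1}$ are rational functions with no pole in $\C^-$, hence continuous on $\C^-$; each admits a finite limit as $|\lambda|\to\infty$. A continuous function on $\C^-$ admitting a finite limit at infinity is uniformly bounded (outside a large disk it is close to its limit; inside, it is bounded by continuity on a compact set). The first boundedness yields $ASI$-stability and the second yields $AS$-stability, which together with the assumed $A$-stability give $\hat A$-stability. The main (mild) obstacle is the infinity limit, which relies crucially on the invertibility of $A$; this invertibility is exactly what separates the hypothesis of the lemma from the weaker necessary condition noted in the remark preceding it, where $0$ was allowed to be a root of $\tau(\pi)$.
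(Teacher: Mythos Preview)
Your proof is correct and follows essentially the same route as the paper: translate the hypothesis via Lemma~\ref{lem:transfoTau} into the statement that $A$ has no eigenvalue in $\C^-$ (in particular $A$ is invertible), then use the identity $(I-\lambda A)^{-1}=-\lambda^{-1}(A-\lambda^{-1}I)^{-1}$ together with a boundedness/compactness argument on $\C^-$. The only noteworthy difference is in how $AS$-stability is obtained: the paper first establishes $ASI$-stability and then invokes Remark~\ref{rem:SIimpliqueS} (since $A$ invertible implies $b$ lies in the range of $A^t$), whereas you prove $AS$-stability directly by showing $\lambda b^t(I-\lambda A)^{-1}\to -b^tA^{-1}$ and applying the same continuity-plus-limit-at-infinity argument. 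Your variant is slightly more self-contained, as it avoids the external reference to \cite{BHV86}; the paper's variant is marginally shorter once that remark is available.
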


\begin{proof}
  Let $c_1<\cdots<c_s$ be such that the Runge--Kutta collocation method with matrix $A$ is $A$-stable.
  Observe that 
  \begin{equation*}
    \forall \lambda\in\C\setminus\{0\},\qquad
    \left(I-\lambda A \right)\text{ is invertible} \Longleftrightarrow
     \left(A-\frac{1}{\lambda} I\right) \text{ is invertible}.
  \end{equation*}
  Since $\tau(\pi)$ has no zero in $\C^-$ by hypothesis, Lemma \ref{lem:transfoTau} ensures that
  the matrix $A$ has no eigenvalue in $\C^-$.
  In particular, for all $\lambda\in\mathcal \C^-\setminus\{0\}$, $(A-\lambda I)$ is invertible.
  Observing that
  \begin{equation}
    \label{eq:identitespectrale}
    \forall \lambda\in\C\setminus\{0\},\qquad
    (I-\lambda A)^{-1} = -\frac{1}{\lambda}\left(A-\frac{1}{\lambda}I\right)^{-1}.
  \end{equation}
  This identity implies that we have
  \begin{equation}
    \label{eq:letruccool}
     \lambda\mapsto\left(A-\lambda I\right)^{-1} \text{ is bounded over $\C^-\setminus\{0\}$}
     \quad \Longrightarrow\quad 
    \lambda\mapsto \left(I-\lambda A \right)^{-1}\text{ is bounded over $\C^-\setminus\{0\}$}.
  \end{equation} 
  Indeed, $\lambda \mapsto (I-\lambda A)^{-1}$ is always bounded in a complex neighbourhood of $0$.

  Observe that $(A-\lambda I)^{-1}$ is a matrix, the coefficients of which are rational functions
  of $\lambda$ of negative degrees. Therefore, since $A$ has no eigenvalue in $\C^-$, we infer
  that $\lambda \mapsto (A-\lambda I)^{-1}$ is bounded on $\C^-$.
  Therefore, using \eqref{eq:letruccool}, we obtain that
  $\lambda \mapsto (I-\lambda A)^{-1}$ in bounded on $\C^-$.
  This implies that the method is $ASI$-stable.
  Since $A$ has no eigenvalue in $\C^-$, we infer that $0$ is not an eigenvalue of $A$.
  Hence, using Lemma \ref{lem:transfoTau}, the matrix $A$ is invertible.
  Using Remark \ref{rem:SIimpliqueS}, we infer that the method is also $AS$-stable using .
  Finally, the method is $A$, $AS$ and $ASI$-stable. Hence, it is $\hat A$-stable.
\end{proof}

A direct adaptation of the previous result yields the following lemma.
\begin{lemma}
  \label{lem:IdonneIchapeau}
  Assume that an $I$-stable Runge--Kutta collocation method is such that $\tau(\pi)$ has
  no root in $i\R$. Then, it is $\hat I$-stable.
\end{lemma}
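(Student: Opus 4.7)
The plan is to carry out a direct adaptation of the proof of Lemma \ref{lem:AdonneAchapeau}, replacing $\C^-$ by $i\R$ throughout. The three ingredients needed for $\hat I$-stability, namely $I$-, $IS$- and $ISI$-stability, will be obtained as follows: $I$-stability is a hypothesis; $ISI$-stability will come from analyzing the resolvent of $A$ on $i\R$; $IS$-stability will then follow from Remark \ref{rem:SIimpliqueS}.

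First, I would translate the hypothesis using Lemma \ref{lem:transfoTau}: since the eigenvalues of $A$ are exactly the roots of $\tau(\pi)$, the assumption that $\tau(\pi)$ has no root in $i\R$ means $A$ has no eigenvalue on $i\R$. In particular $0$ is not an eigenvalue of $A$, so $A$ is invertible. For any $\lambda\in i\R\setminus\{0\}$, the invertibility of $I-\lambda A$ is equivalent to the invertibility of $A-(1/\lambda)I$, and $1/\lambda\in i\R\setminus\{0\}$ by \eqref{eq:stabiRmoins}, hence is not an eigenvalue of $A$. So $I-\lambda A$ is invertible for every $\lambda\in i\R$ (with the case $\lambda=0$ being trivial).

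Next I would establish the uniform bound needed for $ISI$-stability. The map $\mu\mapsto (A-\mu I)^{-1}$ is a matrix of rational functions of $\mu$ whose poles are the eigenvalues of $A$, and whose entries have negative degree at infinity. By hypothesis these poles avoid $i\R$, so $\mu\mapsto (A-\mu I)^{-1}$ is continuous on $i\R$ and tends to $0$ as $|\mu|\to\infty$, hence is uniformly bounded on $i\R$. Combining this with the identity \eqref{eq:identitespectrale}
\[
(I-\lambda A)^{-1} = -\frac{1}{\lambda}\bigl(A-\tfrac{1}{\lambda}I\bigr)^{-1},\qquad \lambda\in\C\setminus\{0\},
\]
and the invariance $\{1/z\ |\ z\in i\R\setminus\{0\}\}=i\R\setminus\{0\}$ from \eqref{eq:stabiRmoins}, the analogue of \eqref{eq:letruccool} on $i\R$ gives that $\lambda\mapsto (I-\lambda A)^{-1}$ is uniformly bounded on $i\R\setminus\{0\}$; boundedness near $0$ is automatic by continuity. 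This yields $ISI$-stability.

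Finally, to deduce $IS$-stability, I would invoke Remark \ref{rem:SIimpliqueS}: since $A$ is invertible (as shown above), so is $A^t$, hence its range equals $\R^s$ and $b$ lies trivially in the range of $A^t$. The remark then gives $IS$-stability as a consequence of $ISI$-stability. Since the method is $I$-, $IS$- and $ISI$-stable, it is $\hat I$-stable. I do not anticipate a real obstacle here, as the argument is strictly parallel to the $\hat A$-case; the only point to keep in mind is that the key identity \eqref{eq:identitespectrale} together with the invariance \eqref{eq:stabiRmoins} of $i\R\setminus\{0\}$ under inversion is precisely what permits the same structural proof to go through.
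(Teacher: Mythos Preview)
Your proposal is correct and is precisely the argument the paper intends: the paper's own proof of this lemma consists of the single sentence ``A direct adaptation of the previous result yields the following lemma,'' and you have written out that adaptation of the proof of Lemma~\ref{lem:AdonneAchapeau} in full, replacing $\C^-$ by $i\R$ and using \eqref{eq:stabiRmoins} in place of \eqref{eq:stabCmoins}.
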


\subsection{Stability of Runge--Kutta collocation methods with Gauss points}

We recall this result, established in \cite{Ehle68} and \cite{Wright70} :
\begin{proposition}
  \label{prop:GaussAstable}
  An $s$ stages collocation Runge--Kutta method with $s$ Gauss points is $A$-stable.
\end{proposition}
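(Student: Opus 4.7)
The plan is to combine two classical features of the $s$-stage Gauss collocation method: the symmetry of the method, which forces the stability function $R$ to have unit modulus on $i\R$, and the identification of $R$ with the $(s,s)$-diagonal Padé approximant to the exponential, which places the poles of $R$ in the open right half-plane. Once both ingredients are secured, the maximum modulus principle on $\C^-$ delivers $|R(\lambda)|\leq 1$, i.e.\ $A$-stability.

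First I would identify $R$. Since the $s$-stage Gauss method has classical order $2s$, one has $R(z)-e^z=O(z^{2s+1})$ as $z\to 0$; since $R$ is rational of type $(s,s)$ (the denominator $\det(I-zA)$ has degree at most $s$, and so does the numerator by \eqref{eq:defR}), uniqueness of the diagonal Padé approximant to the exponential forces
\begin{equation*}
R(z)=\frac{P_s(z)}{P_s(-z)},\qquad P_s(z)=\sum_{k=0}^s \binom{s}{k}\frac{(2s-k)!}{(2s)!}z^k.
\end{equation*}
Next, I would exploit the symmetry of the Gauss points with respect to $1/2$: the corresponding Runge--Kutta method is symmetric, which translates into the functional identity $R(z)R(-z)=1$. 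Combined with the fact that $R$ has real coefficients, this yields $|R(iy)|^2 = R(iy)\overline{R(iy)} = R(iy)R(-iy) = 1$ for every $y\in\R$.

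Then I would locate the poles of $R$. By Lemma \ref{lem:transfoTau}, the nonzero eigenvalues of $A$ are the nonzero roots of $\tau(\pi)/s!$, where $\pi$ is the normalized shifted Legendre polynomial of degree $s$, and the poles of $R$ are the inverses of these eigenvalues. Equivalently, by the Padé identification above, the poles of $R$ are exactly the zeros of $P_s(-z)$. A classical result, typically established either via Hermite's integral representation of the diagonal Padé approximant to $e^z$ or by a direct algebraic analysis of the real polynomial $|P_s(z)|^2 - |P_s(-z)|^2$ on $\C^-$, asserts that all zeros of $P_s(-z)$ lie in the open right half-plane. Granting this, $R$ is analytic on the closed half-plane $\C^-$.

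To conclude, $R$ is analytic on $\C^-$, bounded at infinity (being a ratio of polynomials of the same degree, with $|R(\infty)|=1$ by inspection of the leading coefficients of $P_s$), and of modulus identically $1$ on the boundary $i\R$. The maximum modulus principle applied to $\C^-$ (for instance after the conformal change of variables $\lambda\mapsto(\lambda-1)/(\lambda+1)$ sending $\C^-$ onto the closed unit disk) then gives $|R(\lambda)|\leq 1$ throughout $\C^-$, which is precisely $A$-stability. The main obstacle is the third step: identification of $R$, symmetry, and the final application of the maximum modulus principle are essentially formal, but establishing that the zeros of $P_s(-z)$ all have strictly positive real part is the genuinely nontrivial content of the Ehle--Wright analysis and requires true complex-analytic work.
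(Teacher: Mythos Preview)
Your proposal is correct and follows the same overall architecture as the paper's proof: show that $R$ has no pole in $\C^-$, check that $|R|=1$ on $i\R$ and at infinity, and conclude by the maximum modulus principle. Both the paper and you defer the hard step (positivity of the real parts of the poles) to the Ehle--Wright analysis.

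The execution differs in how $R$ is identified and how the boundary behaviour is obtained. The paper works through its own $\tau$-machinery: it derives the formula $R(\lambda)=\tau(\pi(X+1))(1/\lambda)/\tau(\pi(X))(1/\lambda)$ via a Laplace-transform representation, then computes $\tau(\pi)$ explicitly from $\pi(X)=\frac{\dd^s}{\dd X^s}(X(X-1))^s$ and recognises it as Wright's polynomial, to which Wright's Routh--Hurwitz argument applies directly. You instead invoke the order-$2s$ property plus the rational type $(s,s)$ to pin down $R$ as the diagonal Pad\'e approximant $P_s(z)/P_s(-z)$, and use the symmetry identity $R(z)R(-z)=1$ to get $|R(iy)|=1$. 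Your route is more conceptual and avoids the explicit computation; the paper's route is more self-contained within its framework (and in fact the symmetry argument you give is exactly what the paper develops later in Section~\ref{subsec:symmetry}). Both arrive at the same point before citing the nontrivial pole-location result.
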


\begin{proof}
  Let us simply make the link between the proof of \cite{Wright70} and the notations introduced
  above.
  The rational stability function $R$ introduced in \eqref{eq:defR} satisfies for $\Re(\lambda)>0$
  \begin{equation}
    \label{eq:LaplaceR}
    R(\lambda) = \frac{\displaystyle\int_0^{+\infty}{\rm e}^{-\lambda\sigma}\pi(\sigma+1)\dd \sigma}{\displaystyle\int_0^{+\infty}{\rm e}^{-\lambda\sigma}\pi(\sigma)\dd \sigma}.
  \end{equation}
  Using the fact that for all $k\in\N$,
  $\int_0^{+\infty}{\rm e}^{-\lambda\sigma}\sigma^k\dd\sigma=k!/\lambda^{k+1}$, we infer that
  \begin{equation}
    \label{eq:Rtaudepi}
    R(\lambda)=\frac{\tau(\pi(X+1))(1/\lambda)}{\tau(\pi(X))(1/\lambda)},
  \end{equation}
  and this relation also holds for all $\lambda\in\C\setminus\{0\}$ such that $1/\lambda$ is not
  a zero of $\tau(\pi)$.
  When the $(c_i)_{1\leq i\leq s}$ are Gauss points, one has
  \begin{equation}
    \label{eq:pideXGauss}
    \pi(X) = \frac{\dd^s}{\dd X^s} (X(X-1))^s.    
  \end{equation}
  Following \cite{Wright70}, we apply the Routh--Hurwitz
  algorithm to $\tau(\pi)$ in order to show that its roots have positive real parts.
  By a simple computation from \eqref{eq:pideXGauss}, we have
  \begin{eqnarray*}
    \pi(X) & = & \frac{{\rm d}^s}{{\rm d}X^s} \left(X^s \sum_{k=0}^s \frac{s!}{(k!(s-k)!)}(-X)^k\right)\\
    & = & \sum_{k=0}^s \frac{s!}{k!(s-k)!} \frac{(s+k)!}{k!}(-X)^k.
  \end{eqnarray*}
  With the definition of $\tau(\pi)$ from Lemma \ref{lem:transfoTau}, we infer that
  \begin{equation*}
    \tau(\pi)(\lambda) = s! \sum_{k=0}^s \frac{(s+k)!}{k!(s-k)!} (-\lambda)^k.
  \end{equation*}
  This corresponds to the polynomial in the appendix of \cite{Wright70} with $-\lambda$ here
  instead of $\mu$ there.
  In particular, using \cite{Wright70}, we infer that all the roots of $\tau(\pi)$ have positive
  real parts.
  
  Using \eqref{eq:Rtaudepi}, we infer that $R(\lambda)$ has no pole in $\C^-$.
  Therefore, $\lambda\mapsto R(\lambda)$ is an analytic function in an open set containing $\C^-$.
  Following \cite{Wright70}, we have that $R(\lambda)$ has modulus bounded by one
  on the purely imaginary axis as well as on the left half circle at infinity
  (see also Section \ref{subsec:symmetry}).
  Using the maximum principle for the analytic function $R$, we infer that
  $|R(\lambda)|\leq 1$ in the closed left half plane $\C^-$.
  Therefore, the collocation method with Gauss points is $A$-stable.
\end{proof}

\begin{remark}
  Relations \eqref{eq:LaplaceR} (for $\Re(\lambda)>0$) and \eqref{eq:Rtaudepi}
  (for $\lambda\in\C\setminus\{0\}$) hold for all collocation methods.
  We will use this fact in Section \ref{subsec:algo} to derive an algorithm
  that allows to ensure that a given collocation method is $\hat A$-stable.
\end{remark}

Using Lemma \ref{lem:AdonneAchapeau}, this result can be improved as follows.
\begin{theorem}
  \label{th:GaussAchapeau}
  An $s$ stages collocation Runge--Kutta method with $s$ Gauss points is $\hat A$-stable and
  $\hat I$-stable.
\end{theorem}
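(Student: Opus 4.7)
The plan is to assemble the theorem directly from the three results already at hand: Proposition \ref{prop:GaussAstable}, Lemma \ref{lem:AdonneAchapeau}, and Remark \ref{rem:AdonneI}. There is essentially no new computation to perform; the substantive work has already been done inside the proof of Proposition \ref{prop:GaussAstable}, where the Routh--Hurwitz analysis of $\tau(\pi)$ for the Gauss polynomial was carried out.

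First, I would invoke Proposition \ref{prop:GaussAstable} to obtain $A$-stability of the $s$-stage Gauss collocation Runge--Kutta method. Next, I would extract from the proof of that same proposition the stronger intermediate fact it actually establishes: for the Gauss polynomial $\pi(X)=\frac{\dd^s}{\dd X^s}(X(X-1))^s$, all roots of $\tau(\pi)$ have strictly positive real parts. In particular, $\tau(\pi)$ has no root in $\C^-$ (and a fortiori no root in $i\R$). Hence both hypotheses of Lemma \ref{lem:AdonneAchapeau} are satisfied, and we conclude that the method is $\hat A$-stable.

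To get $\hat I$-stability, the cleanest route is to appeal to Remark \ref{rem:AdonneI}, which asserts that any $\hat A$-stable method is $\hat I$-stable. Alternatively, one can argue directly: Proposition \ref{prop:GaussAstable} together with Remark \ref{rem:AdonneI} gives $I$-stability, and since $\tau(\pi)$ has no root in $\C^-$ it in particular has no root in $i\R$, so Lemma \ref{lem:IdonneIchapeau} applies. Either route finishes the theorem.

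The only delicate point, really, is to make sure that the statement ``all roots of $\tau(\pi)$ lie in the open right half-plane'' is indeed what the Routh--Hurwitz step of the previous proof yields, and not merely ``$\tau(\pi)$ has no root in the open left half-plane.'' Since the Routh--Hurwitz criterion for Hurwitz stability of $\tau(\pi)(-\lambda)$ (as used via \cite{Wright70}) gives strict positivity of the real parts of the roots, the inclusion $\tau(\pi)^{-1}(0)\subset\{\Re\lambda>0\}$ is valid, so the hypothesis of Lemma \ref{lem:AdonneAchapeau} is met without any additional argument. Thus the theorem follows by combining these ingredients.
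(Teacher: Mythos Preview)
Your proposal is correct and follows essentially the same approach as the paper: invoke Proposition \ref{prop:GaussAstable} for $A$-stability, extract from its proof that all roots of $\tau(\pi)$ have positive real parts, apply Lemma \ref{lem:AdonneAchapeau} to obtain $\hat A$-stability, and conclude $\hat I$-stability via Remark \ref{rem:AdonneI}. The paper's proof is exactly this, stated more tersely.
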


\begin{proof}
  Consider an $s$ stages collocation Runge--Kutta method with Gauss points.
  That method is $A$-stable using Proposition \ref{prop:GaussAstable}, and the roots of $\tau(\pi)$
  have positive real parts, as we have seen in the proof of Proposition \ref{prop:GaussAstable}.
  Using Lemma \ref{lem:AdonneAchapeau}, we infer that the method is $\hat A$-stable.
  It is also $\hat I$-stable by Remark \ref{rem:AdonneI}.
\end{proof}

\subsection{An algorithm to ensure that a given
  collocation method with $s$-stages is $\hat A$-stable (hence $\hat I$-stable)}
\label{subsec:algo}

If a collocation method is given by $ c_1<c_2<\hdots<c_s $, then one can form
the polynomial $\pi(X)=\prod_{i=1}^s(X-c_i)$ and the polynomial $\tau(\pi)$ using the definition of
$\tau$ in Lemma \ref{lem:transfoTau}. Using the fact that $\lambda\mapsto R(\lambda)$
is a rational function, the maximum principle implies that the method is $A$-stable if and only if
\begin{itemize}
\item $\lambda\mapsto R(\lambda)$ has no pole in $\C^-$,
\item $|R(\lambda)|\leq 1$ for $\lambda\in i\R$ (and hence in the limit $|\lambda|\mapsto +\infty$ in $\C^-$).
\end{itemize}

These two assumptions can be examinated separately using algorithm that allow to localize
the roots of polynomials as follows:
\begin{itemize}
\item For the first point above,
if we assume that the collocation method is such that $\tau(\pi)$ has no root in $\C^-$,
then, using \eqref{eq:Rtaudepi}, this first point holds.
Moreover, this hypothesis on $\tau(\pi)$ can be checked using the Routh--Hurwitz algorithm
\cite{RH1895}.
This is for example the case for Gauss points (see the proof of Theorem \ref{th:GaussAchapeau}).
\item The second point can be checked by examining the real roots of $|R(ix)|^2-1$.
\end{itemize}

This provides sufficient conditions to decide whether a given Runge--Kutta collocation method
is $A$-stable or not.

For a Runge--Kutta collocation method satisfying the two points above, we have that it is $A$-stable.
Moreover, since $\tau(\pi)$ has no pole in $\C^-$, the method satisfies that all $(c_i)_{1\leq i\leq s}$
are non-zero and the matrix $A$ is invertible..
In this case, using Lemma \ref{lem:AdonneAchapeau}, we conclude that the method is $\hat{A}$-stable.

However, a method may be $A$-stable while $\tau(\pi)$ has zeros in $\C^-$,
and that method may fail to be $\hat{A}$-stable
(see Example \ref{ex:5etagesAASpasASI} below).

The study of a few examples of Runge--Kutta methods can be found in Section \ref{sec:examples}.

\subsection{Symmetry of the $(c_i)_{1\leq i\leq s}$ with respect to $1/2$ and stability}
\label{subsec:symmetry}

\begin{proposition}
  \label{prop:symmetry}
  Assume the $s$ distinct coefficients $c_1<\cdots<c_s$ of a Runge--Kutta collocation
  method are symmetric with respect to $1/2$, {\it i.e.}
  for all $i\in\{1,\cdots,s\}$, $c_i+c_{s+1-i}=1$.
  The linear stability function $R$ of this method satisfies
  \begin{equation}
    \label{eq:Ralinfini}
    \lim_{|\lambda| \to +\infty} |R(\lambda)|\leq 1,
  \end{equation}
  and
  \begin{equation}
    \label{eq:Rsurir}
    \forall i\in {\rm i}\R,\qquad |R(\lambda)|=1.
  \end{equation}
\end{proposition}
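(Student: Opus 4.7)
The plan is to exploit the symmetry of the collocation points to derive a functional equation of the form $R(\lambda)\,R(-\lambda) = 1$ (as rational functions of $\lambda$), from which both claims follow essentially for free. The whole argument reduces to a short algebraic manipulation together with two elementary observations.

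First I would translate the symmetry $c_i + c_{s+1-i} = 1$ into the polynomial identity
\begin{equation*}
\pi(1-X) = (-1)^s \pi(X),\qquad \text{equivalently}\qquad \pi(X+1) = (-1)^s \pi(-X),
\end{equation*}
which is immediate from grouping the factors of $\pi$ pairwise. Next I would use that the operator $\tau$ of Lemma \ref{lem:transfoTau} trivially satisfies $\tau(P(-X))(\mu) = \tau(P)(-\mu)$ for any polynomial $P$, since replacing $X$ by $-X$ multiplies the coefficient of $X^k$ by $(-1)^k$, which is exactly undone by evaluating at $-\mu$. Plugging the above identity into the representation \eqref{eq:Rtaudepi} then yields
\begin{equation*}
R(\lambda) \;=\; (-1)^s\,\frac{\tau(\pi)(-1/\lambda)}{\tau(\pi)(1/\lambda)},
\end{equation*}
and multiplying $R(\lambda)$ by $R(-\lambda)$ gives $R(\lambda)\,R(-\lambda) = 1$ as rational functions of $\lambda$.

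Given this functional equation, I would conclude both claims directly. Since $R$ has real coefficients, for $\lambda = iy \in i\R$ one has $\overline{R(iy)} = R(-iy)$, hence
\begin{equation*}
|R(iy)|^2 \;=\; R(iy)\,\overline{R(iy)} \;=\; R(iy)\,R(-iy) \;=\; 1,
\end{equation*}
which is \eqref{eq:Rsurir}. For the behaviour at infinity, since $R$ is rational the limit $L := \lim_{|\lambda|\to+\infty} R(\lambda)$ exists in $\C\cup\{\infty\}$ and coincides with $\lim_{|\lambda|\to+\infty} R(-\lambda)$; the functional equation then forces $L^2 = 1$, so $L = \pm 1$ is finite with $|L|=1 \leq 1$, proving \eqref{eq:Ralinfini}.

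The argument has no real obstacle; the only point that deserves care is to notice that the formula \eqref{eq:Rtaudepi} is valid on $\C\setminus\{0\}$ away from the zeros of $\tau(\pi)(1/\cdot)$, so the functional equation $R(\lambda)R(-\lambda)=1$ is initially obtained on a cofinite set, but both sides being rational functions it extends to the whole of $\C$; this justifies both the pointwise evaluation on $i\R$ and the limit at infinity without any auxiliary analytic input.
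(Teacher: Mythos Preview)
Your proof is correct and follows essentially the same route as the paper: both start from the symmetry identity $\pi(X+1)=(-1)^s\pi(-X)$, feed it through $\tau$ to get $\tau(\pi(X+1))(\mu)=(-1)^s\tau(\pi)(-\mu)$, and combine this with the representation \eqref{eq:Rtaudepi}. The only difference is packaging: you condense the conclusion into the single functional equation $R(\lambda)R(-\lambda)=1$ and then read off both claims from it (using that $R$ has real coefficients), whereas the paper argues the two claims separately, in particular proving \eqref{eq:Rsurir} via an explicit even/odd split of the coefficients of $\tau(\pi)$ to show that numerator and denominator of $R(ix)$ are complex conjugates. Your formulation is a bit slicker and, as a bonus, the functional equation together with real coefficients automatically rules out poles of $R$ on $i\R$, a point the paper leaves implicit.
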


\begin{proof}
  The result of Section \ref{subsec:algo} above holds independantly of any symmetry condition
  on the collocation points $c_1,\hdots,c_s$.
  However, when the points $c_1,\cdots,c_s$ are symetric with respect to $1/2$, we have
  \begin{equation*}
    \pi(X+1) = \prod_{i=1}^s(X+1-c_i) =  \prod_{i=1}^s (X+c_i) = (-1)^s \prod_{i=1}^s (-X-c_i)
    = (-1)^s \pi (-X).
  \end{equation*}
  This implies that
  \begin{equation}
    \label{eq:identitesymetrique}
    \tau(\pi(X+1))(\lambda)=(-1)^s\tau(\pi(X))(-\lambda).
  \end{equation}
  Hence, using \eqref{eq:Rtaudepi}, we infer
  \begin{equation*}
    |R(\lambda)| =
    \left|\frac{\tau(\pi(X+1))(1/\lambda)}{\tau(\pi(X))(1/\lambda)}\right|
    =
    \left|\frac{\tau(\pi(X+1))(1/\lambda)}{\tau(\pi(X+1))(-1/\lambda)}\right|.
  \end{equation*}
  This implies \eqref{eq:Ralinfini}, as already noted by \cite{Wright70}.
  Expanding the polynomial with real coefficients $\tau(\pi(X))$
  and writing it as $\tau(\pi(X))(\lambda)=\sum_{k=0}^s \alpha_k \lambda^k$,
  Relation \eqref{eq:identitesymetrique} also implies that, for $x\in\R$,
  \begin{eqnarray*}
    \tau(\pi(X+1))(\icomplexe x) & = & (-1)^s\tau(\pi(X))(-\icomplexe x)\\
                        & = & (-1)^s \left(\sum_{\substack{k=0\\ k\ {\rm even}}}^s \alpha_k (-\icomplexe x)^k
    +  \sum_{\substack{k=0\\ k\ {\rm odd}}}^s \alpha_k (-\icomplexe x)^k\right)\\
    & = & (-1)^s\left(\sum_{\substack{k=0\\ k\ {\rm even}}}^s \alpha_k (\icomplexe x)^k
    - \sum_{\substack{k=0\\ k\ {\rm odd}}}^s \alpha_k (\icomplexe x)^k\right)\\
    & = & (-1)^s\left(\sum_{\substack{k=0\\ k\ {\rm even}}}^s \alpha_k \overline{(\icomplexe x)^k}
    +  \sum_{\substack{k=0\\ k\ {\rm odd}}}^s \alpha_k \overline{(\icomplexe x)^k}\right)\\
    & = & (-1)^s \overline{\tau(\pi(X))(\icomplexe x)}.
  \end{eqnarray*}
  In view of \eqref{eq:Rtaudepi}, this implies \eqref{eq:Rsurir}.
\end{proof}

\begin{remark}
  \label{rem:CS-symmIstable}
  Using Proposition \ref{prop:symmetry}, a Runge--Kutta collocation method
  with symmetric points is $I$-stable.
\end{remark}

\begin{remark}
  \label{rem:CS-Ichapeaustable}
  A Runge--Kutta collocation method with symmetric points
  such that $\tau(\pi)$ has no roots in ${\rm i}\R$ is $\hat{I}$-stable.
  Indeed, using the remark above, such a method is $I$-stable.
  With Lemma \ref{lem:IdonneIchapeau}, we infer that it is $\hat I$-stable.
\end{remark}

\begin{remark}
  \label{rem:CS-Achapeaustable}
  Observe that a Runge--Kutta collocation method
  with symmetric points that is such that $\tau(\pi)$ has no poles in $\C^-$
  is $\hat A$-stable.
  Indeed, using Proposition \ref{prop:symmetry}, the associated linear stability function $R$
  satisfies \eqref{eq:Ralinfini} and \eqref{eq:Rsurir}.
  Moreover, the poles of $R$ in $\C^-$ are inverses of eigenvalues of $A$ in $\C^-\setminus\{0\}$.
  Since $\tau(\pi)$ has no root in $\C^-$, Lemma \ref{lem:transfoTau} ensures that
  $A$ has no such eigenvalue. In particular, $R$ is analytic in an open set containing $\C^-$.
  The maximum principle ensures that $|R(\lambda)|\leq 1$ over $\C^-$.
  Hence, the method is $A$-stable.
  Using Lemma \ref{lem:AdonneAchapeau}, we infer that it actually is $\hat A$-stable.
\end{remark}

\subsection{$I$- and $A$-stability of forward
  collocation Runge--Kutta methods with at most $4$ stages}
\label{subsec:forward1234}

\subsubsection{Statement of the result}
This section is devoted to the proof of the following theorem.
The proof of this theorem relies on lemmas allowing to locate the eigenvalues of the matrix $A$
for forward Runge--Kutta collocation methods with at most $4$ stages.
The statements and proofs of these lemmas are provided in Section \ref{subsubsec:localization1234}.
These lemmas rely on the Routh--Hurwitz criterion for polynomials of degrees at most $4$,
which is recalled in Section \ref{subsubsec:RH34}.

\begin{theorem}\label{th:1234stagesIimpliqueA}
  Consider a forward collocation Runge--Kutta method with $s=1$, $s=2$, $s=3$ or $s=4$ stages.
  If it is $I$-stable, then it is $A$-stable.
\end{theorem}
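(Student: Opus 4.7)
The plan is to deduce $A$-stability from $I$-stability via the maximum modulus principle. Viewing the rational function $R$ on the Riemann sphere, if $R$ has no poles in the closed left half plane together with the point at infinity, then $R$ is analytic on that closed region, whose boundary is $i\R\cup\{\infty\}$. On this boundary we have $|R|\le 1$: on $i\R$ by $I$-stability, and at infinity by continuity (taking the limit of $|R(iy)|\le 1$ as $|y|\to\infty$). The maximum modulus principle then yields $|R(\lambda)|\le 1$ for all $\lambda\in\C^-$, i.e.\ $A$-stability. The entire task thus reduces to ruling out any pole of $R$ in $\overline{\C^-}\cup\{\infty\}$.

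Via Lemma~\ref{lem:transfoTau} and \eqref{eq:Rtaudepi}, the finite nonzero poles of $R$ are located at $1/\mu_0$ for nonzero roots $\mu_0$ of $\tau(\pi)$; since $\lambda\mapsto 1/\lambda$ preserves $\C^-\setminus\{0\}$ (see \eqref{eq:stabCmoins}), such poles lie in $\C^-$ exactly when $\tau(\pi)$ has a nonzero root there. Furthermore $R(0)=1$, so $R$ has no pole at $0$, and a degree count in \eqref{eq:Rtaudepi} shows that $R$ has a pole at $\infty$ iff $\#\{i:c_i=0\}>\#\{i:c_i=1\}$. Under $I$-stability, $R$ is bounded on $i\R$, which rules out both finite poles on $i\R$ and poles at $\infty$. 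The remaining obligation is therefore: \emph{for a forward collocation method with $s\in\{1,2,3,4\}$, the polynomial $\tau(\pi)$ has no nonzero roots in the open left half plane}. This is precisely the content of the lemmas of Section~\ref{subsubsec:localization1234}.

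These lemmas apply the Routh--Hurwitz criterion of Section~\ref{subsubsec:RH34} to $\tau(\pi)(-\lambda)$, or, when $c_1=0$, to $\tau(\pi)(-\lambda)/(-\lambda)$. Denoting by $\sigma_k$ the $k$-th elementary symmetric polynomial of the (nonzero) $c_i$, the relevant polynomial in the case $s=4$ with $c_1>0$ is $24\lambda^4+6\sigma_1\lambda^3+2\sigma_2\lambda^2+\sigma_3\lambda+\sigma_4$. For $s\in\{1,2\}$ the positivity of the coefficients is already enough; for $s=3$ the additional Routh--Hurwitz condition $\sigma_1\sigma_2>3\sigma_3$ is the manifest positive sum $\sum_{i\ne j}c_i^2 c_j>0$. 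The main obstacle is the case $s=4$: beyond the easy $\sigma_1\sigma_2>2\sigma_3$, the last Routh--Hurwitz determinant gives the non-trivial inequality
\begin{equation*}
\sigma_1\sigma_2\sigma_3 \,>\, 3\sigma_1^2\sigma_4+2\sigma_3^2.
\end{equation*}
A monomial expansion of the difference of both sides reveals a coefficient $-2$ on every $(2,2,1,1)$-type monomial $c_a^2c_b^2c_cc_d$, so this inequality is \emph{not} manifestly a sum of non-negative monomials; it requires a genuine symmetric-polynomial argument, for instance an AM--GM bound dominating the $(2,2,1,1)$ contribution by the available $(3,2,1,0)$- and $(2,2,2,0)$-type monomials in the difference. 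Once it is secured, the Routh--Hurwitz criterion places all nonzero roots of $\tau(\pi)$ strictly in the open right half plane, and the maximum modulus argument of the first paragraph closes the proof of $A$-stability for each $s\in\{1,2,3,4\}$.
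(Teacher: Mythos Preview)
Your proposal is correct and follows essentially the same approach as the paper: both arguments cite the eigenvalue-localization lemmas of Section~\ref{subsubsec:localization1234} to conclude that $R$ has no poles in $\C^-$, and then invoke $I$-stability together with the maximum modulus principle to obtain $A$-stability. Your treatment is slightly more explicit about the behaviour at infinity (via the Riemann sphere and the degree count in \eqref{eq:Rtaudepi}), whereas the paper absorbs this into the observation that $I$-stability already excludes poles on $i\R$ and at infinity; both routes are valid and the remaining work is exactly the $s=4$ Routh--Hurwitz inequality $\Sigma_1\Sigma_2\Sigma_3>3\Sigma_1^2\Sigma_4+2\Sigma_3^2$, which the paper proves by the monomial-grouping argument you sketch.
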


\begin{remark}
  This result shows that we can not construct an $I$-stable with $1$, $2$, $3$ or $4$ stages
  at non-negative collocation points that is both $I$-stable and {\it not} $A$-stable.
  In contrast,  Example \ref{ex:IstablemaispasAstable} shows that it is possible
  to construct $I$-stable yet not $A$-stable forward Runge--Kutta collocation methods
  with $s=5$ stages.
\end{remark}

\begin{proof}
  Assume $s\in\{1,2,3,4\}$ and the collocation points $(c_i)_{1\leq i\leq s}$ are non-negative.
  Using either Lemma \ref{lem:spectreA1etage} (if $s=1$), \ref{lem:spectreA2etages} (if $s=2$),
  \ref{lem:spectreA3etages} (if $s=3$) or \ref{lem:spectreA4etages} (if $s=4$),
  we infer that ${\rm sp}(A)\subset \{0\} \cup\{z\in\C\ |\ \Re(z)>0\}$.
  In particular, the function $\lambda\mapsto (I-\lambda A)^{-1}$ is analytic over
  $\{z\in\C\ |\ \Re(z)<0\}$. This implies that $\lambda\mapsto R(\lambda)$ (see \eqref{eq:defR})
  is analytic over the same open set.
  Assume now that the collocation Runge--Kutta method is $I$-stable (see Definition \ref{def:stab}).
  This means that
  \begin{equation*}
    \forall x\in\R,\qquad |R(ix)|\leq 1.
  \end{equation*}
  In particular, the rational function $R$ has no pole in $\{z\in\C \ |\ \Re(z)\leq 0\}$.
  Using the maximum principle, we infer that $\lambda \mapsto |R(\lambda)|$ is bounded by $1$
  over this set $\{z\in\C \ |\ \Re(z)\leq 0\}$. Hence, the method is $A$-stable.
\end{proof}

\subsubsection{Eigenvalues of a forward collocation Runge--Kutta method with
  $s\in\{1,2,3,4\}$ stages}
\label{subsubsec:localization1234}

\begin{lemma}
  \label{lem:spectreA1etage}
  If $s=1$ and $0\leq c_1$ is a given real number,
  then the corresponding $1$-stage Runge--Kutta collocation method
  with matrix $A=(a_{i,j})_{1\leq i,j\leq 3}$ satisfies
  \begin{equation*}
    {\rm sp} (A) = \{c_1\} \subset \left\{z\in\C\ |\ \Re(z) \geq 0\right\}.
  \end{equation*}
\end{lemma}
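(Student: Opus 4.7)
The plan is to verify the spectrum directly from the collocation definition, since for a single stage everything collapses to a scalar computation. With $s=1$ and a single collocation point $c_1$, the associated Lagrange polynomial $\ell_1$ from \eqref{eq:defli} is the empty product, hence the constant polynomial $1$. Applying \eqref{eq:defaijbi}, the single entry of $A$ is
\begin{equation*}
  a_{11} = \int_0^{c_1} \ell_1(\tau)\dd\tau = \int_0^{c_1} 1\,\dd\tau = c_1,
\end{equation*}
so $A = (c_1)$ and the spectrum of this $1\times 1$ matrix is $\{c_1\}$.

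Alternatively, one may use Lemma \ref{lem:transfoTau}: here $\pi(X) = X - c_1$, so
\begin{equation*}
  \tau(\pi)(X) = 0!\cdot(-c_1) + 1!\cdot X = X - c_1,
\end{equation*}
and hence $\chi_A = \tau(\pi)/1! = X - c_1$, confirming that the unique eigenvalue of $A$ is $c_1$.

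The inclusion ${\rm sp}(A) = \{c_1\} \subset \{z\in\C\ |\ \Re(z) \geq 0\}$ then follows from the hypothesis $c_1 \geq 0$. There is no real obstacle in this statement; it serves simply as the base case for the induction on $s$ carried out in the subsequent lemmas, and the computation is recorded here only for the sake of uniformity of the presentation.
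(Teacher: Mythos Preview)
Your proof is correct and essentially matches the paper's: the paper simply says the result is obvious from relation \eqref{eq:relationA} of Lemma \ref{lem:relationA} (which for $s=1$ gives $A=W_cV_c^{-1}=(c_1)(1)^{-1}=(c_1)$), while you compute $a_{11}=c_1$ directly from \eqref{eq:defaijbi} and also cross-check via Lemma \ref{lem:transfoTau}. One small editorial point: the subsequent lemmas for $s=2,3,4$ are not an induction but independent case-by-case verifications, so you may want to drop the phrase ``base case for the induction''.
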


\begin{proof}
  This is obvious using for example relation \eqref{eq:relationA} of Lemma \ref{lem:relationA}.
\end{proof}

\begin{lemma}
  \label{lem:spectreA2etages}
  If $s=2$ and $0< c_1<c_2$ are two real numbers,
  then the corresponding $2$-stages Runge--Kutta collocation method
  with matrix $A=(a_{i,j})_{1\leq i,j\leq 3}$ satisfies
  \begin{equation*}
    {\rm sp} (A) \subset \left\{z\in\C\ |\ \Re(z) > 0\right\}.
  \end{equation*}
  If $s=2$ and $0= c_1<c_2$ are two real numbers,
  then the corresponding $3$-stages Runge--Kutta collocation method
  with matrix $A=(a_{i,j})_{1\leq i,j\leq 3}$ satisfies
  \begin{equation*}
    {\rm sp} (A) \subset \{0\} \cup \left\{z\in\C\ |\ \Re(z) > 0\right\}.
  \end{equation*}
\end{lemma}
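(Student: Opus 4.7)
The plan is to apply Lemma \ref{lem:transfoTau} directly, which reduces the computation of ${\rm sp}(A)$ to locating the roots of the explicit degree-$2$ polynomial $\chi_A = \tau(\pi)/s!$. Since $\pi(X) = (X-c_1)(X-c_2) = X^2 - (c_1+c_2)X + c_1 c_2$, applying $\tau$ and dividing by $s! = 2$ gives
\[
\chi_A(\lambda) = \lambda^2 - \frac{c_1+c_2}{2}\lambda + \frac{c_1 c_2}{2}.
\]
So the eigenvalues of $A$ are the two (complex) roots of this quadratic, and their sum is $(c_1+c_2)/2$ and their product is $c_1 c_2 / 2$.

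I would then split into the two cases of the statement. When $0 < c_1 < c_2$, both the sum and the product of the roots are strictly positive. If the discriminant $(c_1+c_2)^2 - 8 c_1 c_2$ is non-negative, the two roots are real with positive sum and positive product, hence both strictly positive; and if the discriminant is negative, the roots are complex conjugates whose common real part equals $(c_1+c_2)/4 > 0$. In either subcase, ${\rm sp}(A) \subset \{z \in \C \mid \Re(z) > 0\}$. When $c_1 = 0 < c_2$, the characteristic polynomial factors as $\chi_A(\lambda) = \lambda(\lambda - c_2/2)$, so ${\rm sp}(A) = \{0, c_2/2\} \subset \{0\} \cup \{z \in \C \mid \Re(z) > 0\}$. (Alternatively, one can derive the same conclusions by applying the Routh--Hurwitz criterion recalled in Section \ref{subsubsec:RH34} to $\chi_A(-\mu)$, whose coefficients $1, (c_1+c_2)/2, c_1c_2/2$ are all non-negative and strictly positive in the first case.)

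One small technical point: the proof of Lemma \ref{lem:transfoTau} is explicitly written under the assumption that all $c_i \neq 0$, so for the second case $c_1 = 0$ one should either invoke continuity of $\chi_A$ in $(c_1, c_2)$, or verify the factorization by computing $A$ directly from \eqref{eq:defaijbi}--\eqref{eq:defli}: one gets $a_{1,1} = a_{1,2} = 0$ (trivially from $\int_0^0 = 0$) and $a_{2,2} = c_2/2$, so ${\rm sp}(A) = \{0, c_2/2\}$ as claimed. There is essentially no obstacle to this proof: the entire argument amounts to the explicit factorization of a monic quadratic whose coefficients have prescribed signs, and the only care needed is the boundary case $c_1 = 0$.
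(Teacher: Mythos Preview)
Your proposal is correct and follows essentially the same approach as the paper: compute the characteristic polynomial $\chi_A(X)=X^2-\frac{c_1+c_2}{2}X+\frac{c_1c_2}{2}$ explicitly and then locate its two roots according to whether $c_1=0$ or $c_1>0$. The only cosmetic difference is that the paper obtains $\chi_A$ from Lemma~\ref{lem:relationA} (the relation $A=W_cV_c^{-1}$) rather than from Lemma~\ref{lem:transfoTau}, which incidentally sidesteps the $c_1=0$ technicality you flagged; your handling of that point via a direct computation of $A$ is fine.
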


\begin{proof}
  Using for example relation \eqref{eq:relationA} of Lemma \ref{lem:relationA}, we have
  \begin{equation*}
    \chi_A(X) =  X^2 - \frac{c_1+c_2}{2} X + \frac{c_1c_2}{2}.
  \end{equation*}
  If $c_1=0$ then the roots of $\chi_A$ are $0$ and $c_2/2>0$.
  If $c_1>0$, then $c_2>0$ and the roots of $\chi_A$ have positive real parts.
\end{proof}

\begin{lemma}
  \label{lem:spectreA3etages}
  If $s=3$ and $0< c_1<c_2<c_3$ are three real numbers,
  then the corresponding $3$-stages Runge--Kutta collocation method
  with matrix $A=(a_{i,j})_{1\leq i,j\leq 3}$ satisfies
  \begin{equation*}
    {\rm sp} (A) \subset \left\{z\in\C\ |\ \Re(z) > 0\right\}.
  \end{equation*}
  If $s=3$ and $0= c_1<c_2<c_3$ are three real numbers,
  then the corresponding $3$-stages Runge--Kutta collocation method
  with matrix $A=(a_{i,j})_{1\leq i,j\leq 3}$ satisfies
  \begin{equation*}
    {\rm sp} (A) \subset \{0\} \cup \left\{z\in\C\ |\ \Re(z) > 0\right\}.
  \end{equation*}
\end{lemma}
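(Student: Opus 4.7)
The plan is to apply Lemma \ref{lem:transfoTau} to write the characteristic polynomial of $A$ explicitly in terms of the elementary symmetric functions of $c_1, c_2, c_3$, and then to localize its roots either by direct factorisation (when $c_1 = 0$) or by the Routh--Hurwitz criterion for cubics (when $c_1 > 0$). Setting $e_1 = c_1+c_2+c_3$, $e_2 = c_1 c_2 + c_1 c_3 + c_2 c_3$ and $e_3 = c_1 c_2 c_3$, one has $\pi(X) = X^3 - e_1 X^2 + e_2 X - e_3$ and hence
\begin{equation*}
\chi_A(X) = \frac{\tau(\pi)(X)}{3!} = X^3 - \frac{e_1}{3} X^2 + \frac{e_2}{6} X - \frac{e_3}{6}.
\end{equation*}

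In the degenerate case $c_1 = 0$, one has $e_3 = 0$ and $e_2 = c_2 c_3$, so $\chi_A$ factorises as $X\bigl(X^2 - \tfrac{c_2+c_3}{3} X + \tfrac{c_2 c_3}{6}\bigr)$. This exhibits $0$ as a simple root, while the remaining quadratic has strictly positive trace and determinant, so its two roots sit in $\{z\in\C\mid \Re(z) > 0\}$, by exactly the argument used in Lemma \ref{lem:spectreA2etages}.

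In the main case $c_1 > 0$, all coefficients $e_1, e_2, e_3$ are strictly positive. The roots of $\chi_A$ lie in $\{\Re(z) > 0\}$ if and only if the roots of $\tilde\chi(X) := -\chi_A(-X)$ lie in $\{\Re(z) < 0\}$. Since
\begin{equation*}
\tilde\chi(X) = X^3 + \frac{e_1}{3} X^2 + \frac{e_2}{6} X + \frac{e_3}{6}
\end{equation*}
has all positive coefficients, I would invoke the degree-$3$ Routh--Hurwitz test of Section \ref{subsubsec:RH34}; its only non-trivial condition reduces to $e_1 e_2 > 3 e_3$, which after expansion is just
\begin{equation*}
c_1^2 c_2 + c_1^2 c_3 + c_1 c_2^2 + c_2^2 c_3 + c_1 c_3^2 + c_2 c_3^2 > 0,
\end{equation*}
manifestly true when all $c_i > 0$.

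There is no real obstacle: the work reduces to cubic Routh--Hurwitz plus a trivial symmetric expansion. The main things to be careful about are the sign bookkeeping when passing from $\chi_A$ (whose roots we want in the open right half-plane) to $\tilde\chi$ (whose roots we want in the open left half-plane), and the fact that strict positivity of the $c_i$ is what upgrades the Routh--Hurwitz conclusion from $\Re(z) \geq 0$ to $\Re(z) > 0$. This same template will carry over to the $s=4$ case of Lemma \ref{lem:spectreA4etages}, where the Routh--Hurwitz inequalities will be heavier but still polynomial in the $c_i$.
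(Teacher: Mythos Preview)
Your proof is correct and follows essentially the same route as the paper: compute $\chi_A$ explicitly, factor out $X$ when $c_1=0$, and apply the cubic Routh--Hurwitz criterion when $c_1>0$, reducing to the obvious inequality $e_1 e_2 > 3e_3$. The only cosmetic difference is that you obtain $\chi_A$ via Lemma~\ref{lem:transfoTau} ($\chi_A=\tau(\pi)/s!$) whereas the paper cites Lemma~\ref{lem:relationA} and a direct computation; both yield the identical cubic, and the remainder of the argument is the same.
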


\begin{proof}
  Using \eqref{eq:relationA} of Lemma \ref{lem:relationA}, a direct computation
  ensures that the characteristic polynomial of $A$ reads
\begin{equation}
\label{eq:caracpoly_A}
\chi_A(X)=X^3-\dfrac{1}{3}(c_1+c_2+c_3)X^2+\dfrac{1}{6}(c_1c_3+c_2c_3+c_1c_2)X-\dfrac{1}{6}c_1c_2c_3.
\end{equation}
In the first case $c_1>0$ (hence $c_3>c_2>c_1>0$), the Routh-Hurwitz criterion
(Proposition \ref{prop:RH3}) applied to the third order polynomial $\chi_A(-X)$ ensures that
if
\begin{equation}
  \label{eq:RH3}
  \dfrac{1}{3}(c_1+c_2+c_3) \dfrac{1}{6}(c_1c_3+c_2c_3+c_1c_2) - \dfrac{1}{6}c_1c_2c_3>0
\end{equation}
then all the roots of $\chi_A$ have positive real parts. Moreover, relation \eqref{eq:RH3} is obvious
by direct expansion of the left hand side using the fact that for all $i\in\{1,\cdots,3\}$, $c_i>0$.

In the second case $c_1=0<c_2<c_3$, we have
\begin{equation}
\label{eq:caracpoly_A2}
\chi_A(X)=X\left( X^2-\dfrac{c_2+c_3}{3}X+\dfrac{c_2c_3}{6} \right).
\end{equation}
This ensures that all the roots of $\chi_A(X)/X$ have positive real parts.
\end{proof}

\begin{lemma}
  \label{lem:spectreA4etages}
  If $s=4$ and $0< c_1<c_2<c_3<c_4$ are four real numbers,
  then the corresponding $4$-stages Runge--Kutta collocation method
  with matrix $A=(a_{i,j})_{1\leq i,j\leq 4}$ satisfies
  \begin{equation*}
    {\rm sp} (A) \subset \left\{z\in\C\ |\ \Re(z) > 0\right\}.
  \end{equation*}
    If $s=4$ and $0= c_1<c_2<c_3<c_4$ are four real numbers,
  then the corresponding $4$-stages Runge--Kutta collocation method
  with matrix $A=(a_{i,j})_{1\leq i,j\leq 4}$ satisfies
  \begin{equation*}
    {\rm sp} (A) \subset \{0\} \cup \left\{z\in\C\ |\ \Re(z)> 0\right\}.
  \end{equation*}
\end{lemma}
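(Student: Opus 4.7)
The plan is to mirror the proof strategy of Lemma \ref{lem:spectreA3etages}: compute $\chi_A$ explicitly as a polynomial whose coefficients are elementary symmetric polynomials $e_1,e_2,e_3,e_4$ in the $c_i$, and then apply the Routh--Hurwitz criterion from Section \ref{subsubsec:RH34} to $\chi_A(-X)$ to conclude that all roots of $\chi_A$ lie in the open right half-plane. From Lemma \ref{lem:transfoTau} and the expansion $\pi(X)=X^4-e_1X^3+e_2X^2-e_3X+e_4$, one immediately obtains
\begin{equation*}
\chi_A(X) = X^4 - \frac{e_1}{4}X^3 + \frac{e_2}{12}X^2 - \frac{e_3}{24}X + \frac{e_4}{24}.
\end{equation*}

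In the case $0<c_1<c_2<c_3<c_4$, all $e_k$ are strictly positive, so the coefficients of $\chi_A(-X)$ are positive and the degree-$4$ Routh--Hurwitz conditions reduce to two non-trivial Hurwitz-determinant inequalities, namely $e_1 e_2 > 2 e_3$ and $e_1 e_2 e_3 > 3 e_1^2 e_4 + 2 e_3^2$. The first is immediate from the expansion $e_1 e_2 - 3 e_3 = \sum_{i\neq j} c_i^2 c_j > 0$, which gives in fact a stronger bound. In the degenerate case $c_1=0<c_2<c_3<c_4$, one has $e_4=0$, so $\chi_A(X) = X \cdot Q(X)$ with $Q$ a monic cubic whose non-leading coefficients involve only the elementary symmetric polynomials in $c_2,c_3,c_4$; the degree-$3$ Routh--Hurwitz criterion (Proposition \ref{prop:RH3}) applied to $Q(-X)$ yields a single positivity check handled exactly as in the proof of Lemma \ref{lem:spectreA3etages}, and the root $0$ accounts for the extra element in ${\rm sp}(A)$.

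The main obstacle is the second Hurwitz inequality of the non-degenerate case, $e_1 e_2 e_3 > 3 e_1^2 e_4 + 2 e_3^2$. Unlike the single cubic inequality arising for $s=3$, it has two competing terms on the right-hand side and does not factor in an obvious way. I would attempt a direct monomial-by-monomial expansion of $e_1 e_2 e_3 - 3 e_1^2 e_4 - 2 e_3^2$ (a symmetric polynomial of total degree $6$ in the $c_i$) and exhibit the result as a manifestly non-negative combination of monomials when the $c_i$ are positive; a natural alternative is to package the bookkeeping via Newton's or Maclaurin's inequalities for elementary symmetric polynomials on positive reals, but finding a clean derivation, rather than a brute expansion, is where I expect the effort to concentrate.
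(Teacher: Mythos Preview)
Your proposal is correct and follows essentially the same route as the paper: the authors compute $\chi_A$ via the elementary symmetric polynomials, apply the degree-$4$ Routh--Hurwitz criterion (in its alternative form, Proposition~\ref{prop:RH4-alt}, so that only the single inequality $e_1 e_2 e_3 > 3 e_1^2 e_4 + 2 e_3^2$ needs checking), and then verify that inequality by exactly the brute monomial expansion you anticipate, grouping the $41$ degree-$6$ monomials into small clusters each of which is manifestly positive for $0<c_1<c_2<c_3<c_4$. The degenerate case $c_1=0$ is handled precisely as you describe, by factoring out $X$ and invoking Proposition~\ref{prop:RH3}.
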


\begin{proof}
  Using \eqref{eq:relationA} of Lemma \ref{lem:relationA}, a direct computation
  ensures that the characteristic polynomial of $A$ reads
\begin{equation}
\label{eq:caracpoly_A4}
\chi_A(X)=X^4-\dfrac{1}{4}\Sigma_1 X^3+\dfrac{1}{12}\Sigma_2X^2-\dfrac{1}{24}\Sigma_3 X+\dfrac{1}{24}\Sigma_4,
\end{equation}
where
\begin{equation}
  \label{eq:defSigma4}
\Sigma_1=\displaystyle{\sum_{i=1}^4 c_i},\qquad
\Sigma_2=\displaystyle{\sum_{1\leq i<j \leq 4}c_ic_j},\qquad
\Sigma_3=\displaystyle{\sum_{1\leq i<j<k \leq 4}c_ic_jc_k},\qquad
\Sigma_4=c_1c_2c_3c_4,
\end{equation}
are the elementary symmetric polynomials in $c_1, c_2, c_3, c_4$.
Assume first that $c_1>0$. In this case, we have also $c_2,c_3,c_4>0$.
In particular, with \eqref{eq:defSigma4}, we have for all $i\in\{1,\cdots,4\}$, $\Sigma_i>0$.
Therefore, by the Routh-Hurwitz criterion (Proposition \ref{prop:RH4-alt} applied to $\chi_A(-X)$), if
\begin{equation}
  \label{eq:RH4}
  \frac{\Sigma_1}{4} \frac{\Sigma_2}{12} \frac{\Sigma_3}{24}>\left(\frac{\Sigma_3}{24}\right)^2
  +\left(\frac{\Sigma_1}{4}\right)^2 \frac{\Sigma_4}{24},
\end{equation}
then the fourth order polynomial $\chi_A$ has all its roots in the open right half plane
$\{z\in\C\ |\ \Re(z)>0\}$.
Observe that, in our case,
\begin{eqnarray*}
  \lefteqn{\Sigma_1 \Sigma_2 \Sigma_3 -2 \left(\Sigma_3\right)^2
  -3\left(\Sigma_1\right)^2 \Sigma_4 = }\\
   & & \begin{matrix}
          + c_1^3 c_2^2 c_3  & +c_1^3 c_2^2 c_4 & +c_1^3 c_2 c_3^2 & +c_1^3 c_2 c_4^2 & +c_1^3 c_3^2 c_4 & +c_1^3 c_3 c_4^2 & +c_1^2 c_2^3 c_3\\
          +c_1^2 c_2^3 c_4 & {\color{blue}+c_1^2 c_2^2 c_3^2} & {\color{blue}-2 c_1^2 c_2^2 c_3 c_4} & {\color{blue}+c_1^2 c_2^2 c_4^2} & +c_1^2 c_2 c_3^3 & {\color{orange}-2 c_1^2 c_2 c_3^2 c_4} & {\color{green}-2 c_1^2 c_2 c_3 c_4^2} \\
          +c_1^2 c_2 c_4^3 & {\color{orange}+c_1^2 c_3^3 c_4}  & {\color{orange}+c_1^2 c_3^2 c_4^2} & {\color{green}+c_1^2 c_3 c_4^3} & +c_1 c_2^3 c_3^2  & +c_1 c_2^3 c_4^2 & +c_1 c_2^2 c_3^3\\
          {\color{purple}-2 c_1 c_2^2 c_3^2 c_4} & {\color{red}-2 c_1 c_2^2 c_3 c_4^2} & {\color{green}+c_1 c_2^2 c_4^3}
          & {\color{brown}-2 c_1 c_2 c_3^2 c_4^2} & {\color{purple}+c_1 c_3^3 c_4^2} & {\color{purple}+c_1 c_3^2 c_4^3} & + c_2^3 c_3^2 c_4\\
          {\color{red}+c_2^3 c_3 c_4^2} &  +c_2^2 c_3^3 c_4  & {\color{red}+c_2^2 c_3^2 c_4^2} & +c_2^2 c_3 c_4^3  & {\color{brown}+c_2 c_3^3 c_4^2} & {\color{brown}+c_2 c_3^2 c_4^3} & 
        \end{matrix} .
\end{eqnarray*}
This symmetric polynomial is {\it not} non-negative over $\R^4$.
However, it is positive over $(0,+\infty)^4$. Indeed, recalling that $0<c_1<c_2<c_3<c_4$,
we have that the sum of the 3 blue (respectively green, yellow, purple, orange and brown) terms is
positive. For example, for the orange terms,
\begin{equation*}
  2 c_1^2 c_2 c_3^2 c_4 = c_1^2 c_2 c_3^2 c_4 + c_1^2 c_2 c_3^2 c_4 < c_1^2 c_3^3 c_4 + c_1^2c_3^2c_4^2.
\end{equation*}
Hence, $\Sigma_1 \Sigma_2 \Sigma_3 -2 \left(\Sigma_3\right)^2 -3\left(\Sigma_1\right)^2 \Sigma_4 >0$.
Using the Routh-Hurwitz criterion (Proposition \ref{prop:RH4-alt}), we infer that all the roots of $\chi_A$ have positive
real parts in this case.

\noindent
Assume now that $c_1=0$. In this case, we have $0<c_2<c_3<c_4$. Moreover,
\begin{equation*}
\chi_A(X)=X\left(X^3-\dfrac{c_2+c_3+c_4}{4} X^2+\dfrac{(c_2c_3+c_2c_4+c_3c_4)}{12}X-\dfrac{c_2c_3c_4}{24}\right).
\end{equation*}
Using the Routh-Hurwitz criterion for the third order polynomial $\chi_A(-X)/X$
(Proposition \ref{prop:RH3}), we have that, if
\begin{equation}
  \label{eq:RH43}
  \dfrac{c_2+c_3+c_4}{4} \dfrac{(c_2c_3+c_2c_4+c_3c_4)}{12} - \dfrac{c_2c_3c_4}{24} >0,
\end{equation}
then all the roots of $\chi_A$ are in the closed right half plane $\{z\in\C\ |\ \Re(z)\geq 0\}$.
Moreover, \eqref{eq:RH43} is obvious by expanding the left hand side and using the positivity
of $c_2$, $c_3$ and $c_4$. This proves the result.
\end{proof}

\subsubsection{The Routh--Hurwitz stability criterion for polynomials of order $3$ and $4$}
\label{subsubsec:RH34}

The following two classical stability criterion are due to Routh and Hurwtiz \cite{RH1895}.

\begin{proposition}[Routh-Hurwitz criterion for polynomials of order 3]
  \label{prop:RH3}
  Assume that $P=X^3+a_2X^2+a_1X+a_0$ is a given polynomial of order $3$ with real coefficients.
  If $a_2>0$, $a_2a_1-a_0>0$ and $a_0>0$, then all the (complex) roots of $P$ have negative
  real parts.
\end{proposition}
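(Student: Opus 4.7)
The plan is to work directly with the roots rather than manipulate the polynomial coefficients. Since $P$ has real coefficients and degree~$3$, it has either three real roots or one real root together with a pair of non-real complex conjugate roots; denote the roots by $r_1,r_2,r_3$ in either case. By Vieta's formulas, $a_2=-(r_1+r_2+r_3)$, $a_1=r_1r_2+r_1r_3+r_2r_3$, and $a_0=-r_1r_2r_3$. The cornerstone of the argument is the symmetric-polynomial identity
\[
a_2a_1-a_0 \;=\; -(r_1+r_2)(r_2+r_3)(r_1+r_3),
\]
which is checked by a direct expansion of the right-hand side. Thus the three hypotheses $a_2>0$, $a_2a_1-a_0>0$, $a_0>0$ translate respectively into $r_1+r_2+r_3<0$, $(r_1+r_2)(r_2+r_3)(r_1+r_3)<0$, and $r_1r_2r_3<0$.

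Once the hypotheses are rewritten in this form, the conclusion follows by a short case analysis. If all three roots are real, the condition $r_1r_2r_3<0$ forces an odd number of them to be negative. If exactly one were negative, one could assume $r_1<0\le r_2\le r_3$; then $r_2+r_3>0$, and combining $r_1+r_2+r_3<0$ with this inequality forces $|r_1|>r_2+r_3$, so both $r_1+r_2$ and $r_1+r_3$ are negative. The product $(r_1+r_2)(r_2+r_3)(r_1+r_3)$ would then be positive, contradicting the second hypothesis; hence all three roots must be negative. If instead one root $r_1$ is real and the other two are $r_{2,3}=\alpha\pm i\beta$ with $\beta\neq 0$, then $r_1r_2r_3=r_1(\alpha^2+\beta^2)<0$ directly gives $r_1<0$, and $(r_1+r_2)(r_2+r_3)(r_1+r_3)=2\alpha\bigl((r_1+\alpha)^2+\beta^2\bigr)<0$ directly gives $\alpha<0$. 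In both cases every root of $P$ has negative real part.

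The only non-routine step is producing the factorization of $a_2a_1-a_0$; after that the proof reduces to sign bookkeeping. A fully alternative route would be to observe that the hypotheses prevent $P$ from having any root on the imaginary axis (if $P(i\omega)=0$ with $\omega\in\R$, separating real and imaginary parts yields either $a_0=0$ or $a_2a_1-a_0=0$) and then use a continuous deformation of $(a_2,a_1,a_0)$ to, say, the coefficients of $(X+1)^3$ while preserving the three strict inequalities, so that no root can cross the imaginary axis during the deformation. I expect this homotopy argument to be slightly heavier to set up cleanly than the factorization approach above, which is why I favour the direct method.
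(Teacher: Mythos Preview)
Your proof is correct. The factorization $a_2a_1-a_0=-(r_1+r_2)(r_2+r_3)(r_1+r_3)$ is valid, and the case analysis that follows is sound; in the ``exactly one real negative root'' subcase you implicitly use that $r_2,r_3$ must be strictly positive (otherwise $r_1r_2r_3=0$), which justifies $r_2+r_3>0$.

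As for comparison with the paper: the paper does not prove this proposition at all. It is stated as a classical result attributed to Routh and Hurwitz and simply quoted for later use in Lemmas~\ref{lem:spectreA3etages} and~\ref{lem:spectreA4etages}. Your argument therefore supplies a self-contained elementary proof where the paper offers only a citation. The route you chose---translating the coefficient inequalities into sign conditions on symmetric functions of the roots and then exhausting the possible root configurations---is the most direct proof available for degree~$3$; the homotopy alternative you sketch at the end is closer in spirit to how the general Routh--Hurwitz criterion is usually established, but for this special case your factorization approach is both shorter and more transparent.
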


\begin{proposition}[Routh-Hurwitz criterion for polynomials of order 4]
  Assume that $P=X^4+a_3X^3+a_2X^2 +a_1 X + a_0$ is a given polynomial of order $4$ with real
  coefficients.
  If $a_3>0$, $a_3a_2-a_1>0$, $a_3a_2a_1-a_1^2-a_3^2a_0>0$ and $a_0>0$, then all the (complex) roots of $P$ have negative
  real parts.
\end{proposition}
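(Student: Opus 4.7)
The plan is to invoke the general Routh--Hurwitz criterion, which asserts that a monic real polynomial $X^n + \alpha_{n-1} X^{n-1} + \cdots + \alpha_0$ has all its complex roots in the open left half plane if and only if the $n$ leading principal minors of the associated $n \times n$ Hurwitz matrix are strictly positive. Proposition \ref{prop:RH3} is the degree-3 instance of this general criterion, and here I would apply it in degree 4, which reduces the problem to a direct computation of four determinants.

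For $P = X^4 + a_3 X^3 + a_2 X^2 + a_1 X + a_0$, the associated $4 \times 4$ Hurwitz matrix reads
$$
H = \begin{pmatrix} a_3 & a_1 & 0 & 0 \\ 1 & a_2 & a_0 & 0 \\ 0 & a_3 & a_1 & 0 \\ 0 & 1 & a_2 & a_0 \end{pmatrix},
$$
and its four leading principal minors are $\Delta_1 = a_3$, $\Delta_2 = a_3 a_2 - a_1$, $\Delta_3 = a_3 a_2 a_1 - a_1^2 - a_3^2 a_0$ (by cofactor expansion of the upper-left $3 \times 3$ block along its first row), and $\Delta_4 = a_0 \Delta_3$ (by expanding $\det H$ along the last column, which has only one nonzero entry $a_0$ in position $(4,4)$). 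The first three hypotheses of the proposition are exactly $\Delta_1, \Delta_2, \Delta_3 > 0$, and, once $\Delta_3 > 0$ is secured, the condition $\Delta_4 > 0$ is equivalent to $a_0 > 0$. Thus the four assumptions of the proposition are equivalent to the positivity of all four Hurwitz minors, and the conclusion follows from the general criterion.

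An alternative, more self-contained route would avoid citing the general theorem: any real polynomial of degree $4$ factors over $\R$ as a product of real linear and quadratic factors, and the condition that all complex roots of such factors lie in the open left half plane is directly readable off the signs of their coefficients (positivity of the coefficients of each real quadratic factor, and negativity of each real root). One would then verify, case by case (four real roots; two real plus one complex-conjugate pair; two complex-conjugate pairs), that the four inequalities of the statement are equivalent to the coefficient-positivity conditions arising from any such factorization. This amounts to an elementary, if slightly tedious, algebraic verification via Vi\`ete's relations.

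The main obstacle, should one demand full self-containedness, is not the degree-4 algebra itself but the justification of the general Routh--Hurwitz theorem, which is usually proved via Sturm sequences, the Hermite--Biehler theorem, or an argument-principle analysis of the curve $y \mapsto P(\icomplexe y)$. In the present context it is natural simply to cite the classical reference \cite{RH1895}, as the authors already do for the degree-3 Proposition \ref{prop:RH3}.
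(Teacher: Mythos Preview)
Your computation of the Hurwitz minors is correct, and the argument is sound. However, the paper itself does \emph{not} prove this proposition: it is stated without proof as a classical result, with only the reference \cite{RH1895} given (just as for the degree-3 case in Proposition~\ref{prop:RH3} and the alternative degree-4 version in Proposition~\ref{prop:RH4-alt}). So there is nothing to compare against---your proposal simply supplies an argument where the authors chose to give none, and your instinct at the end (``it is natural simply to cite the classical reference, as the authors already do'') is exactly what they did.
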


\begin{proposition}[alternative Routh-Hurwitz criterion for polynomials of order 4]
  \label{prop:RH4-alt}
  Assume that $P=X^4+a_3X^3+a_2X^2 +a_1 X + a_0$ is a given polynomial of order $4$ with real
  coefficients.
  If $a_3>0$, $a_1>0$, $a_3a_2a_1-a_1^2-a_3^2a_0>0$ and $a_0>0$, then all the (complex) roots of $P$ have negative
  real parts.
\end{proposition}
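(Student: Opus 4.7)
The plan is to reduce the alternative criterion directly to the standard Routh--Hurwitz criterion stated just above it. The only difference between the two statements is that the condition $a_3 a_2 - a_1 > 0$ appearing in the standard version is replaced by the weaker-looking condition $a_1 > 0$ in the alternative version. So it suffices to show that, under the hypotheses of the alternative criterion, the standard hypothesis $a_3 a_2 - a_1 > 0$ is automatically satisfied.

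First, I would rewrite the third hypothesis $a_3 a_2 a_1 - a_1^2 - a_3^2 a_0 > 0$ in the form
\begin{equation*}
  a_3 a_2 a_1 > a_1^2 + a_3^2 a_0.
\end{equation*}
Then, using $a_3 > 0$ together with $a_0 > 0$, we have $a_3^2 a_0 > 0$, whence
\begin{equation*}
  a_3 a_2 a_1 > a_1^2.
\end{equation*}
Since $a_1 > 0$, dividing by $a_1$ preserves the inequality and yields $a_3 a_2 > a_1$, i.e.\ $a_3 a_2 - a_1 > 0$.

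At this point, the four conditions $a_3 > 0$, $a_3 a_2 - a_1 > 0$, $a_3 a_2 a_1 - a_1^2 - a_3^2 a_0 > 0$, and $a_0 > 0$ of the standard Routh--Hurwitz criterion for polynomials of order $4$ are all satisfied, and applying it concludes that every complex root of $P$ has negative real part. There is no real obstacle in this proof: the entire content is the elementary observation that, in the presence of the positivity of $a_0, a_1, a_3$, the third Routh--Hurwitz inequality forces the second one.
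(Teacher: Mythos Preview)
Your proof is correct. The paper itself does not supply a proof of this proposition: both the standard and the alternative order-$4$ Routh--Hurwitz criteria are simply stated as classical results and attributed to \cite{RH1895}. Your reduction---observing that $a_3 a_2 a_1 > a_1^2 + a_3^2 a_0 > a_1^2$ and then dividing by $a_1>0$ to recover $a_3 a_2 - a_1 > 0$---is exactly the natural way to see that the alternative hypotheses imply the standard ones, so nothing is missing.
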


%**********************************************************************************

\section{Examples of Runge--Kutta collocation methods and analysis
  of their stability properties}
\label{sec:examples}

This section is devoted to the introduction of examples of forward Runge--Kutta collocation
methods with $s=2$, $s=4$ and $s=5$ stages and the analysis of their stability
properties with respect to Definition \ref{def:stab}.
Most of these methods are used as underlying methods in the linearly implicit methods
analyzed in \cite{DL2023PDE}.
In particular, some of them serve as examples in \cite{DL2023PDE} to illustrate the necessity
of all the stability criterions of Definition \ref{def:stab} for the convergence of the
linearly implicit methods described and implemented there, when applied to systems of ODEs
arising from the spatial discretization of evolution PDEs.

\subsection{Examples with $s=2$ stages}

We consider a collocation Runge--Kutta method with $s=2$ stages.
Assuming $c_1< c_2$, we have the Butcher tableau
  \[
\renewcommand\arraystretch{1.2}
\begin{array}
{c|cc}
 c_1 & \frac{c_1(c_1/2-c_2)}{c_1-c_2} & \frac{c_1^2/2}{c_1-c_2}\\[1 mm]
c_2 & \frac{c_2^2/2}{c_2-c_1} & \frac{c_2(c_2/2-c_1)}{c_2-c_1} \\ [1 mm]
\hline
& \frac{1/2-c_2}{c_1-c_2} & \frac{1/2-c_1}{c_2-c_1}
\end{array}.
\]
The stability function then reads
\begin{equation*}
  \begin {array}{c} R(\lambda) =  1+\lambda\, \left( -{\frac { \left( 1/2-{
\it c_2} \right)  \left( 2\,\lambda\,{\it c_1}\,{\it c_2}-\lambda\,{{\it 
c_2}}^{2}-2\,{\it c_1}+2\,{\it c_2} \right) }{ \left( {\it c_1}-{\it c_2}
 \right) ^{2} \left( {\it c_1}\,{\it c_2}\,{\lambda}^{2}-\lambda\,{\it 
c_1}-\lambda\,{\it c_2}+2 \right) }}-{\frac { \left( 1/2-{\it c_1}
 \right) \lambda\,{{\it c_2}}^{2}}{ \left( {\it c_2}-{\it c_1} \right) 
 \left( {\it c_1}-{\it c_2} \right)  \left( {\it c_1}\,{\it c_2}\,{\lambda
}^{2}-\lambda\,{\it c_1}-\lambda\,{\it c_2}+2 \right) }}\right.\\ \left.+{\frac {
 \left( 1/2-{\it c_2} \right) \lambda\,{{\it c_1}}^{2}}{ \left( {\it c_1}
-{\it c_2} \right) ^{2} \left( {\it c_1}\,{\it c_2}\,{\lambda}^{2}-
\lambda\,{\it c_1}-\lambda\,{\it c_2}+2 \right) }}-{\frac { \left( 1/2-{
\it c_1} \right)  \left( \lambda\,{{\it c_1}}^{2}-2\,\lambda\,{\it c_1}\,
{\it c_2}-2\,{\it c_1}+2\,{\it c_2} \right) }{ \left( {\it c_2}-{\it c_1}
 \right)  \left( {\it c_1}-{\it c_2} \right)  \left( {\it c_1}\,{\it c_2}
\,{\lambda}^{2}-\lambda\,{\it c_1}-\lambda\,{\it c_2}+2 \right) }}
 \right). \end {array}
\end{equation*}
These facts are used in the next four examples.

\begin{example}[Collocation Runge--Kutta method with $s=2$ Gauss points]
  \label{ex:meth2etagesGauss}
For Gauss points, the method reads
\begin{equation}
  \label{eq:Gauss2}
\renewcommand\arraystretch{1.2}
\begin{array}
{c|cc}
 \frac{1}{2}-\frac{\sqrt{3}}{6} & \frac{1}{4} & \frac{1}{4}-\frac{\sqrt{3}}{6}\\
\frac{1}{2}+\frac{\sqrt{3}}{6} & \frac{1}{4}+\frac{\sqrt{3}}{6} & 1/4\\
\hline
& \frac{1}{2} & \frac{1}{2} 
\end{array}.
\end{equation}
This method is $\hat A$-stable and hence $\hat I$-stable thanks to Theorem \ref{th:GaussAchapeau}.
\end{example}

\begin{example}[Collocation Runge--Kutta method with $s=2$ stages that is $\hat{A}$-stable]
  \label{ex:meth2etagesequi}
For the points $(c_1,c_2)=(1/3,2/3)$, the method reads,
\begin{equation}
  \label{eq:equispaced2}
\renewcommand\arraystretch{1.2}
\begin{array}
{c|cc}
 \frac13 & \frac12 & -\frac16\\
 \frac23 & \frac23 & 0\\[1mm]
\hline
& \frac{1}{2} & \frac{1}{2} 
\end{array}.
\end{equation}
One can check that the spectrum of the matrix $A$ consists in 2 complex conjugate eigenvalues
$\frac{1}{4}\pm{\rm i}\frac{\sqrt{7}}{12}$.
This method has symmetric points with respect to $1/2$ and $A$ has no eigenvalue in $\C^-$.
Using Remark \ref{rem:CS-Achapeaustable}, we infer that it is $\hat A$-stable, hence $\hat I$-stable.
\end{example}

\begin{example}[Lobatto collocation method with $s=2$ stages]
  \label{ex:meth2etagesLobatto}
  The Runge--Kutta collocation method with $s=2$ and $(c_1,c_2)=(0,1)$ reads
    \[
\renewcommand\arraystretch{1.2}
\begin{array}
{c|cc}
 0 & 0 & 0 \\[1 mm]
 1 & \frac12 & \frac12 \\ [1 mm]
\hline
& \frac12 & \frac12
\end{array}.
\]
  It is $A$-stable.
  Indeed, in this case,
  \begin{equation*}
    R(\lambda) = - \frac{\lambda+2}{\lambda-2},
  \end{equation*}
  and hence, for $\lambda\in\C^-$, $|R(\lambda)|\leq 1$.
  Moreover, this method is $ASI$-stable. Indeed, we have
  \begin{equation*}
    (I-\lambda A)^{-1}=
    \begin{pmatrix}
      1 & 0 \\
      \frac{\lambda}{2-\lambda} & \frac{2}{2-\lambda}
    \end{pmatrix},
  \end{equation*}
  and hence this matrix is bounded on $\C^-$. Finally, the method is $AS$-stable since
  \begin{equation*}
    \lambda b^t (I-\lambda A)^{-1} =
    \begin{pmatrix}
      \frac{\lambda}{2-\lambda} & \frac{\lambda}{2-\lambda}
    \end{pmatrix},
  \end{equation*}
  which is bounded on $\C^-$.
  Hence, this method is $\hat A$-stable.
\end{example}

\begin{example}[Collocation Runge--Kutta method with $s=2$ stages, not $A$- nor $I$-stable]
  \label{ex:notAstable}
The Butcher tableau of the Runge--Kutta collocation method with $s=2$ and $(c_1,c_2)=(1/4,1/3)$ reads
    \[
\renewcommand\arraystretch{1.2}
\begin{array}
{c|cc}
 \frac14 & \frac58 & -\frac38 \\[1 mm]
\frac13 & \frac23 & -\frac13 \\ [1 mm]
\hline
& -2 & 3
\end{array}.
\]
  It is {\bf neither} $A$-stable {\bf nor} $I$-stable.
  Indeed, in this case,
  \begin{equation*}
    R(\lambda) = \frac{6\lambda^2+17\lambda+24}{\lambda^2-7\lambda+24}.
  \end{equation*}
\end{example}

\subsection{Example with $s=4$ stages}

\begin{example}[Lobatto collocation method with $s=4$ uniform points]
  \label{ex:4etages}
  The Butcher tableau of the Runge--Kutta collocation method with $s=4$
  and $(c_1,c_2,c_3,c_4)=(0,\frac13,\frac23,1)$ reads:
\[
\renewcommand\arraystretch{1.2}
\begin{array}
{c|cccc}
0 & 0 & 0 & 0 & 0\\
1/3& 1/8 & 19/72 & -5/72 & 1/72\\
2/3 & 1/9 & 4/9 & 1/9 & 0\\
1& 1/8 & 3/8 & 3/8 & 1/8\\
\hline
& 1/8 & 3/8 & 3/8 & 1/8 
\end{array}.
\]
In particular we have, using \eqref{eq:defR}, 
$$R(\lambda)={\frac {-{\lambda}^{3}-11\,{\lambda}^{2}-54\,\lambda-108}{{\lambda}^{3
}-11\,{\lambda}^{2}+54\,\lambda-108}}.
$$
Since the points of this method are symmetric with respect to $1/2$ and $\tau(\pi)$
has no root in $\C^-$, we infer (using Remark \ref{rem:CS-Achapeaustable}) that this method
is $\hat{A}$-stable.
\end{example}

\subsection{Examples of Runge--Kutta collocation methods with $s=5$ stages}

\subsubsection{An example of $A$-stable and $AS$-stable method which is not $ASI$-stable}

It may happen that a Runge--Kutta collocation method is $A$-stable and $AS$-stable,
yet not $ASI$-stable, when, for example,
$\tau(\pi(X))$ and $\tau(\pi(X+1))$ share the same roots in $\C^-$.
In this case, $R(\lambda)$ may be smooth and with modulus bounded by $1$ in $\C^-$,
$\lambda\mapsto \lambda b^{t}(I-\lambda A)^{-1}$ be bounded in $\C^-$,
while $\lambda\mapsto (I-\lambda A)^{-1}$ is not bounded in $\C^-$.
This is for example the case for the following $5$-stages method.

\begin{example}[An example of $A$-stable and $AS$-stable method which is not $ASI$-stable]
  \label{ex:5etagesAASpasASI}
  Take $c_1=1/4$, $c_3=1/2$, $c_5=3/4$,
  \begin{equation*}
    c_2=\frac12-\dfrac{\sqrt{7}}{14},
\end{equation*}
and $c_4=1-c_2$. The value of $c_2$ is chosen as a solution to
\begin{equation*}
  14\,c_{{2}}^{2}-14\,c_{
{2}}+3 =0,
\end{equation*}
so that the greatest common divisor of $\tau(\pi(X))$ and $\tau(\pi(X+1))$ is not $1$.
Indeed, the five roots of $\tau(\pi(X))$ are simple and consist in
$i\alpha$, $-i \alpha$, a positive real number 
and two other conjugated complex numbers of positive real part, where
\begin{equation*}
  \alpha=\dfrac{3\sqrt{7}}{56}.
\end{equation*}
Moreover, $i\alpha$ and $-i\alpha$ are also roots of $\tau(\pi(X+1))$.
In particular, using \eqref{eq:Rtaudepi},
$\lambda\mapsto R(\lambda)$ is an analytic function in an open set containing $\C^-$.
Thanks to Section \ref{subsec:symmetry},
the fact that the points are symmetric with respect to $1/2$ implies that
$\lim_{|\lambda|\to+\infty}|R(\lambda)|= 1$ and $|R(\lambda)|=1$ for $\lambda\in \icomplexe\R$.
The maximum principle implies that $|R(\lambda)|\leq 1$
for $\lambda\in\C^-$. Hence the method is $A$-stable.
However, since $\icomplexe\alpha$ and $-\icomplexe\alpha$ are eigenvalues of $A$, we have
\begin{equation*}
  \|(I-\lambda A)^{-1}\| \underset{\lambda \to -\icomplexe\alpha^{-1}}{\longrightarrow} + \infty
  \qquad {\rm and} \qquad
  \|(I-\lambda A)^{-1}\| \underset{\lambda \to \icomplexe\alpha^{-1}}{\longrightarrow} + \infty,
\end{equation*}
so that the method is {\bf not} $ASI$-stable. The boundedness of $\lambda\mapsto \lambda b^{t}(I-\lambda A)^{-1}$ over $\C^-$ follows from the fact that this smooth function is bounded at infinity (degree
at most $-4$ in $\lambda$) and in the neighborhood $\pm i/\alpha$ since $b$ is orthogonal to the eigenvectors of $A$ associated with $\pm i\alpha$. Therefore the method is $AS$-stable.

The full Butcher tableau of this method is
\begin{equation*}
    \renewcommand\arraystretch{1.2}
\begin{array}{c|ccccc}
\frac14 & {\frac{3259}{1440}}&-{\frac{1421}{720}}-
{\frac {21\,\sqrt {7}}{64}}&{\frac{163}{120}}&-{\frac{1421}{720}}+{
\frac {21\,\sqrt {7}}{64}}&{\frac{829}{1440}}\\
\frac12-\frac{\sqrt{7}}{14} & {\medskip}{
\frac { \kappa_{-} ^{2} \left( 281\,\sqrt {7}+1120
 \right) }{15435}}&-{\frac{343}{180}}-{\frac {107\,\sqrt {7}}{315}}&{
\frac { \kappa_{-} ^{2} \left( 106\,\sqrt {7}+455
 \right) }{10290}}&-{\frac { \kappa_{-} ^{2} \left( 
97\,\sqrt {7}+770 \right) }{17640}}&{\frac { \kappa_{-} ^{2} \left( 71\,\sqrt {7}+280 \right) }{15435}}\\
\frac12 & {\medskip}{\frac{203}{90}}&-{\frac{343}{180}}-{\frac {7\,
\sqrt {7}}{24}}&{\frac{22}{15}}&-{\frac{343}{180}}+{\frac {7\,\sqrt {7
                                 }}{24}}&{\frac{53}{90}}\\
\frac12+\frac{\sqrt{7}}{14} & {\medskip}-{\frac { \kappa_{+} ^{2} \left( 281\,\sqrt {7}-1120 \right) }{15435}}&{\frac {
 \kappa_{+} ^{2} \left( 97\,\sqrt {7}-770 \right) }{
17640}}&-{\frac { \kappa_{+} ^{2} \left( 106\,\sqrt {7
}-455 \right) }{10290}}&-{\frac{343}{180}}+{\frac {107\,\sqrt {7}}{315
}}&-{\frac { \kappa_{+} ^{2} \left( 71\,\sqrt {7}-280
    \right) }{15435}}\\
\frac34 & {\medskip}{\frac{363}{160}}&-{\frac{147}{
80}}-{\frac {21\,\sqrt {7}}{64}}&{\frac{63}{40}}&-{\frac{147}{80}}+{
                                                  \frac {21\,\sqrt {7}}{64}}&{\frac{93}{160}}\\
  \hline
  & \frac{128}{45} & \frac{-343}{90} & \frac{44}{15} & \frac{-343}{90} & \frac{128}{45}
\end{array},
\end{equation*}
where $\kappa_{\pm}=7\pm\sqrt{7}$.
\end{example}

\subsubsection{A collocation Runge--Kutta method with $s=5$ stages which is $I$- but not $A$-stable}
We proved Theorem \ref{th:1234stagesIimpliqueA} in Section \ref{subsec:forward1234},
which states that forward Runge--Kutta collocation methods with at most $4$ stages that are
$I$-stable are also $A$-stable.
The example below shows that this is no longer the case for forward Runge--Kutta collocation methods
with at least $5$ stages: There exists a (forward) Runge--Kutta collocation method with $s=5$ stages
which is $I$-stable yet not $A$-stable.

\begin{example}[A collocation Runge--Kutta method with $s=5$ stages which is $I$- but not $A$-stable]
  \label{ex:IstablemaispasAstable}
  We consider the 5-stages Runge--Kutta collocation method defined by its Butcher tableau
  \begin{equation*}
    \renewcommand\arraystretch{1.2}
\begin{array}{c|ccccc}
\frac14 & \frac{4453}{2400} & -\frac{4347}{1600} & \frac{221}{120} & -\frac{1917}{1600} & \frac{1123}{2400}\\
\frac13 & \frac{3824}{2025} & -\frac{133}{50} & \frac{742}{405} & -\frac{179}{150} & \frac{944}{2025}\\
\frac12 & \frac{281}{150} & -\frac{513}{200} & \frac{29}{15} & -\frac{243}{200} & \frac{71}{150}\\
\frac23 & \frac{3808}{2025} & -\frac{194}{75} & \frac{824}{405} & -\frac{28}{25} & \frac{928}{2025}\\
\frac34 & \frac{1503}{800} & -\frac{4131}{1600} & \frac{81}{40} & -\frac{1701}{1600} & \frac{393}{800}\\
  \hline
  & \frac{176}{75} & -\frac{189}{50} & \frac{58}{15} & -\frac{189}{50} & \frac{176}{75} 
\end{array}.
\end{equation*}
This method is $I$-stable but not $A$-stable.
Indeed, since the points $(c_i)_{1\leq i\leq 5}$ are symmetric with respect to $1/2$,
we have, using Remark \ref{rem:CS-symmIstable} that this method is $I$-stable.
Note that, since the matrix $A$ has no eigenvalue on the purely imaginary axis,
this method is moreover $\hat{I}$-stable, thanks to Remark \ref{rem:CS-Ichapeaustable}.
Moreover, the characteristic polynomial of $A$, given by 
$34560X^5 - 17280X^4 + 4164X^3 - 642X^2 + 71X - 6$, has 5 simple complex roots including two complex conjugates with a negative real part (approximately $\delta_\pm \simeq-0.0008959473813 \pm 0.1432367668 i$). In this case $\lambda=1/\delta_+$ and $\lambda=1/\delta_-$ are poles of $R(\lambda)$, with negative real part so that the method is not $A$-stable. 
\end{example}

\section{Conclusion}

  This paper deals with stability of classical Runge--Kutta collocation methods.
  For the purpose of providing stability of linearly implicit methods as developed in
  \cite{DL2020} and used in \cite{DL2023PDE}, when applied to nonlinear evolution PDEs,
  we introduce and analyze several notions of stability.
  These notions are grouped in the definition of $\hat{A}$- and $\hat{I}$-stability
  (see Definition \ref{def:stab}).
  We provide sufficient conditions that can be checked algorithmically to ensure
  that these stability notions are fulfilled by a given Runge--Kutta collocation method.
  In particular, we prove that an $A$-stable Runge--Kutta collocation method such that some
  polynomial $\tau(\pi)$ has no root in $\C^-$ is $\hat{A}$-stable
  (see Lemma \ref{lem:AdonneAchapeau}).
  Since $\tau(\pi)$ can be computed from the $s$ collocation points $c_1,\cdots,c_s$,
  and since the fact that it has no root in $\C^-$ can be checked using the Routh--Hurwitz algorithm,
  we derive an algorithmically checkable sufficient condition to ensure that a given $A$-stable
  Runge--Kutta collocation method is $\hat{A}$-stable.

  We also investigate the relations between $\hat{A}$- and $\hat{I}$-stability
  and show that a forward Runge--Kutta collocation method with $s=5$ stages may be $I$-stable
  even if not $A$-stable (see Example \ref{ex:IstablemaispasAstable}),
  while all forward Runge--Kutta collocation
  methods with at most $4$ stages that are $I$-stable are necessarily $A$-stable
  (Theorem \ref{th:1234stagesIimpliqueA}). We also investigate the influence
  of the symmetry of the collocation points with respect to $1/2$ (see Section \ref{subsec:symmetry}).
  A last consequence of our analysis is that
  Runge--Kutta collocation methods at Gauss points are $\hat{A}$- (hence $\hat{I}$-) stable
  (see Theorem \ref{th:GaussAchapeau}).

\section*{Acknowledgments}
This work was partially supported by the Labex CEMPI (ANR-11-LABX-0007-01).

\bibliographystyle{abbrv}
\bibliography{labib}

\begin{thebibliography}{10}

\bibitem{Besse2004}
C.~Besse.
\newblock A relaxation scheme for the nonlinear {S}chr\"odinger equation.
\newblock {\em SIAM J. Numer. Anal.}, 42(3):934--952 (electronic), 2004.

\bibitem{BurrageButcher79}
K.~Burrage and J.~C. Butcher.
\newblock Stability criteria for implicit {R}unge-{K}utta methods.
\newblock {\em SIAM J. Numer. Anal.}, 16(1):46--57, 1979.

\bibitem{BHV86}
K.~Burrage, W.~H. Hundsdorfer, and J.~G. Verwer.
\newblock A study of {$B$}-convergence of {R}unge-{K}utta methods.
\newblock {\em Computing}, 36(1-2):17--34, 1986.

\bibitem{Butcher75}
J.~C. Butcher.
\newblock A stability property of implicit {R}unge-{K}utta methods.
\newblock {\em BIT Numerical Mathematics}, 15:358--361, 1975.

\bibitem{Butcher06}
J.~C. Butcher.
\newblock Thirty years of {$G$}-stability.
\newblock {\em BIT}, 46(3):479--489, 2006.

\bibitem{Crouzeix75}
M.~Crouzeix.
\newblock {\em Sur l'approximation des {\'e}quations diff{\'e}rentielles
  op{\'e}rationnelles lin{\'e}aires par des m{\'e}thodes de {R}unge-{K}utta}.
\newblock PhD thesis, Universit{\'e} de Paris VI Th{\`e}se, 1975.

\bibitem{Crouzeix76}
M.~Crouzeix.
\newblock Sur les m{\'e}thodes de {R}unge-{K}utta pour l'approximation des
  problemes d'{\'e}volution.
\newblock In R.~Glowinski and J.~L. Lions, editors, {\em Computing Methods in
  Applied Sciences and Engineering}, pages 206--223, Berlin, Heidelberg, 1976.
  Springer Berlin Heidelberg.

\bibitem{Crouzeix79}
M.~Crouzeix.
\newblock Sur la {$B$}-stabilit\'{e} des m\'{e}thodes de {R}unge-{K}utta.
\newblock {\em Numer. Math.}, 32(1):75--82, 1979.

\bibitem{Crouzeix93}
M.~Crouzeix, S.~Larsson, S.~Piskar\"{e}v, and V.~Thom\'{e}e.
\newblock The stability of rational approximations of analytic semigroups.
\newblock {\em BIT}, 33(1):74--84, 1993.

\bibitem{Crouzeix80}
M.~Crouzeix and P.-A. Raviart.
\newblock M{\'e}thodes de {R}unge--{K}utta.
\newblock Unpublished lecture notes, Universit{\'e} de Rennes, 1980.

\bibitem{Dahlquist63}
G.~G. Dahlquist.
\newblock A special stability problem for linear multistep methods.
\newblock {\em Nordisk Tidskr. Informationsbehandling (BIT)}, 3:27--43, 1963.

\bibitem{DL2020}
G.~Dujardin and I.~Lacroix-Violet.
\newblock {High order linearly implicit methods for evolution equations}.
\newblock {\em {ESAIM: Mathematical Modelling and Numerical Analysis}}, Apr.
  2022.

\bibitem{DL2023PDE}
G.~Dujardin and I.~Lacroix-Violet.
\newblock High order linearly implicit methods for semilinear evolution pdes.
\newblock preprint, 2023.

\bibitem{Ehle68}
B.~L. Ehle.
\newblock High order {$A$}-stable methods for the numerical solution of systems
  of {D}.{E}.'s.
\newblock {\em Nordisk Tidskr. Informationsbehandling (BIT)}, 8:276--278, 1968.

\bibitem{Tadmor01}
S.~Gottlieb, C.-W. Shu, and E.~Tadmor.
\newblock Strong stability-preserving high-order time discretization methods.
\newblock {\em SIAM Review}, 43(1):89--112, 2001.

\bibitem{Hairer82}
E.~Hairer.
\newblock Constructive characterization of {$A$}-stable approximations to
  {${\rm exp}(z)$} and its connection with algebraically stable {R}unge-{K}utta
  methods.
\newblock {\em Numer. Math.}, 39(2):247--258, 1982.

\bibitem{Hairer86}
E.~Hairer.
\newblock $a$- and $b$-stability for runge-kutta methods - characterizations
  and equivalence.
\newblock {\em Numerische Mathematik}, 48:383--390, 1986.

\bibitem{HNW93}
E.~Hairer, S.~Norsett, and G.~Wanner.
\newblock {\em Solving Ordinary Differential Equations I: Nonstiff Problems},
  volume~8.
\newblock Springer, 01 1993.

\bibitem{Hairer81}
E.~Hairer and G.~Wanner.
\newblock Algebraically stable and implementable runge-kutta methods of high
  order.
\newblock {\em SIAM Journal on Numerical Analysis}, 18(6):1098--1108, 1981.

\bibitem{HW96}
E.~Hairer and G.~Wanner.
\newblock {\em Solving Ordinary Differential Equations II. Stiff and
  Differential-Algebraic Problems}, volume~14.
\newblock Springer Verlag Series in Comput. Math., 01 1996.

\bibitem{GNI2002}
E.~Hairer, G.~Wanner, and L.~C.
\newblock {\em Geometric Numerical Integration: Structure-Preserving Algorithms
  for Ordinary Differential Equations}.
\newblock Springer Series in Computational Mathematics 31. Springer Berlin
  Heidelberg, 2nd ed edition, 2002.

\bibitem{RH1895}
A.~Hurwitz.
\newblock Ueber die {B}edingungen, unter welchen eine {G}leichung nur {W}urzeln
  mit negativen reellen {T}heilen besitzt.
\newblock {\em Math. Ann.}, 46(2):273--284, 1895.

\bibitem{Lubich95}
C.~Lubich and A.~Ostermann.
\newblock Runge-{K}utta approximation of quasi-linear parabolic equations.
\newblock {\em Math. Comp.}, 64(210):601--627, 1995.

\bibitem{Runge1895}
C.~Runge.
\newblock Ueber die numerische aufl{\"o}sung von differentialgleichungen.
\newblock {\em Mathematische Annalen}, 46:167--178, 1895.

\bibitem{SanzSerna88}
J.~M. Sanz-Serna.
\newblock Runge-{K}utta schemes for {H}amiltonian systems.
\newblock {\em BIT}, 28(4):877--883, 1988.

\bibitem{Song12}
M.~Song and M.~Z. Liu.
\newblock Numerical stability and oscillation of the {R}unge-{K}utta methods
  for equation {$x'(t)=ax(t)+a_0x(M[\frac{t+N}{M}])$}.
\newblock {\em Adv. Difference Equ.}, pages 2012:146, 13, 2012.

\bibitem{Wright70}
K.~Wright.
\newblock Some relationships between implicit {R}unge-{K}utta, collocation
  {L}anczos {$\tau $} methods, and their stability properties.
\newblock {\em Nordisk Tidskr. Informationsbehandling (BIT)}, 10:217--227,
  1970.

\end{thebibliography}
\end{document}